\newtheorem{theorem}{Theorem}[section]
\newtheorem{fact}[theorem]{Fact}
\newtheorem{corollary}[theorem]{Corollary}
\newtheorem{definition}[theorem]{Definition}
\newtheorem{lemma}[theorem]{Lemma}
\newtheorem{proposition}[theorem]{Proposition}
\newtheorem*{proposition*}{Proposition}
\DeclareMathOperator{\Aut}{Aut}
\DeclareMathOperator{\Mult}{Mult}
\begin{document}
\title[Classification of operator algebraic varieties]{The classification
problem for operator algebraic varieties and their multiplier algebras}
\author{Michael Hartz}
\address{Department of Pure Mathematics, University of Waterloo, Waterloo,
ON N2L 3G1, Canada}
\curraddr{Department of Mathematics, Washington University in St Louis, One
Brookings Drive, St. Louis, MO 63130}
\email{mphartz@wustl.edu}
\author{Martino Lupini}
\address{Fakult\"{a}t f\"{u}r Mathematik, Universit\"{a}t Wien,
Oskar-Morgenstern-Platz 1, Room 02.126, 1090 Wien, Austria}
\curraddr{Mathematics Department\\
California Institute of Technology\\
1200 E. California Blvd\\
MC 253-37\\
Pasadena, CA 91125}
\email{lupini@caltech.edu}
\thanks{M.H. was partially supported by an Ontario Trillium Scholarship.
M.L. was supported by the York University Susan Mann Dissertation
Scholarship and by the ERC Starting grant no.\ 259527 of Goulnara
Arzhantseva. This work was initiated during a visit of the first named
author to the Fields Institute in March 2015. The hospitality of the
Institute is gratefully acknowledged.}
\subjclass[2010]{Primary 47L30, 03E15; Secondary 46E22, 47A13}
\keywords{Non-selfadjoint operator algebra, reproducing kernel Hilbert
space, multiplier algebra, Nevanlinna-Pick kernel, Borel complexity,
turbulence, Polish groupoid, Blaschke sequence}
\dedicatory{ }

\begin{abstract}
We study from the perspective of Borel complexity theory the classification
problem for multiplier algebras associated with operator algebraic
varieties. These algebras are precisely the multiplier algebras of
irreducible complete Nevanlinna-Pick spaces. We prove that these algebras
are not classifiable up to algebraic isomorphism using countable structures
as invariants. In order to prove such a result, we develop the theory of
turbulence for Polish groupoids, which generalizes Hjorth's turbulence
theory for Polish group actions. We also prove that the classification
problem for multiplier algebras associated with varieties in a finite
dimensional ball up to isometric isomorphism has maximum complexity among
the essentially countable classification problems. In particular, this shows
that Blaschke sequences are not smoothly classifiable up to conformal
equivalence via automorphisms of the disc.
\end{abstract}

\maketitle

\section{Introduction\label{Section:introduction}}

Let $d\in \mathbb{N}$ be a natural number and let $\mathbb{B}_{d}$ denote
the open unit ball in $\mathbb{C}^{d}$. The \emph{Drury-Arveson space} $%
H_{d}^{2}$ is the completion of the space of complex polynomials $\mathbb{C}%
\left[ z_{1},\ldots ,z_{d}\right] $ in the variables $z_{1},\ldots ,z_{d}$
with respect to the inner product defined on monomials by%
\begin{equation*}
\left\langle z_{1}^{\alpha _{1}}\cdots z_{d}^{\alpha _{d}},z_{1}^{\beta
_{1}}\cdots z_{d}^{\beta _{d}}\right\rangle =\left\{ 
\begin{array}{cc}
\frac{\alpha _{1}!\cdots \alpha _{d}!}{\left( \alpha _{1}+\cdots +\alpha
_{d}\right) !} & \text{if }\alpha _{i}=\beta _{i}\text{ for }1\leq i\leq d%
\text{;} \\ 
0 & \text{otherwise.}%
\end{array}%
\right.
\end{equation*}%
This space has been the focus of intensive study for over a decade. If $d=1$%
, then the Drury-Arveson space is the classical Hardy space on the unit
disc, which plays an important role in the theory of operators on Hilbert
space. From the point of view of operator theory, $H^2_d$ appears to be the
correct generalization of the classical Hardy space to several variables.
For a comprehensive treatment of the Drury-Arveson space, the reader is
referred to \cite{arveson_subalgebras_1998} and to the survey article \cite%
{shalit_operator_2013}.

One can identify $H_{d}^{2}$ with a Hilbert space of analytic functions on $%
\mathbb{B}_{d}$ (see, for example, \cite[Section 3]{shalit_operator_2013}).
An \emph{operator algebraic variety} is the set of common zeros in $\mathbb{B%
}_{d}$ of some subset of $H_{d}^{2}$. Observe that any algebraic variety is
an operator algebraic variety. In turn, an operator algebraic variety is in
particular an analytic variety.

A similar construction can be performed when $d=\infty $. In this case $%
\mathbb{C}^{\infty }$ should be regarded as the infinite-dimensional
separable Hilbert space, and $\mathbb{B}_{\infty }$ as the norm open unit
ball of $\mathbb{C}^{\infty }$. The Drury-Arveson space $H_{\infty }^{2}$ is
defined as above as the completion of the algebra of complex polynomials $%
\mathbb{C}\left[ z\right] $ in the sequence of variables $z_{n}$ for $n\in 
\mathbb{N}$. In this context a complex-valued function defined on an open
subset of $\mathbb{B}_{\infty }$ is \emph{analytic }if it admits a uniformly
convergent power series representation.

The \emph{multiplier algebra} of $H^2_d$ is defined by 
\begin{equation*}
\Mult(H_{d}^{2})=\{\varphi :\mathbb{B}_{d}\rightarrow \mathbb{C}:\varphi
\cdot f\in H_{d}^{2}\text{ for all }f\in H_{d}^{2}\}.
\end{equation*}
Every element $\varphi \in \Mult(H^2_d)$ induces a bounded multiplication
operator $M_\varphi$ on $H^2_d$. The identification of $\varphi$ with $%
M_\varphi$ allows one to regard $\Mult(H^2_d)$ as an algebra of bounded
linear operators on the Hilbert space $H^2_d$.

For an operator algebraic variety $V\subset \mathbb{B}_{d}$, we consider the
algebra 
\begin{equation*}
\mathcal{M}_{V}=\{f |_{V}:f\in \Mult(H_{d}^{2})\}.
\end{equation*}%
Then $\mathcal{M}_{V}$ is an algebra of functions on $V$, and, in fact, can
be regarded as the multiplier algebra of a Hilbert space of functions on $V$%
. Observe also that $\mathcal{M}_V$ can be identified with a quotient of $%
\Mult(H^2_d)$. These algebras are of particular interest because of a
theorem of Agler and McCarthy \cite{agler_complete_2000}, according to which
every multiplier algebra of an irreducible complete Nevanlinna-Pick space
can be identified with such an algebra.

The problem of classifying the multiplier algebras $\mathcal{M}_{V}$ has
attracted considerable attention in the last few years \cite%
{davidson_isomorphism_2011, hartz_topological_2012,
davidson_multipliers_2014, davidson_operator_2015,
hartz_isomorphism_2015,alpay_hilbert_2003,kerr_isomorphism_2013,mccarthy_spaces_2015,arcozzi_carleson_2008}%
. In particular, we refer the reader to the survey article \cite%
{salomon_isomorphism_2014}. It is proved in \cite{davidson_operator_2015}%
---see also \cite[Section 4]{salomon_isomorphism_2014}---that for $d\in 
\mathbb{N}$ and operator algebraic varieties $V,W$ in $\mathbb{B}_{d}$, the
corresponding multiplier algebras $\mathcal{M}_{V}$ and $\mathcal{M}_{W}$
are (completely) isometrically isomorphic if and only if $V$ and $W$ are $%
\Aut(\mathbb{B}_{d})$\emph{-conformally equivalent}. This means that there
exists an automorphism $F\in \Aut(\mathbb{B}_{d})$ mapping $V$ onto $W$.
Here, $\Aut(\mathbb{B}_{d})$ denotes the group of biholomorphic maps of $%
\mathbb{B}_{d}$ onto itself. Similar conclusions hold for $d=\infty $ as
long as $V$ and $W$ have the same \emph{affine codimension }\cite[Subsection
4.1]{salomon_isomorphism_2014}.

The situation for algebraic isomorphism is far less clear. (It should be
noted that $\mathcal{M}_{V}$ and $\mathcal{M}_{W}$, being commutative and
semisimple, are algebraically isomorphic if and only if they are isomorphic
as Banach algebras.) Under some mild assumption on the varieties and for $%
d<\infty $, it was shown in \cite{davidson_operator_2015} that if $\mathcal{M%
}_{V}$ and $\mathcal{M}_{W}$ are algebraically isomorphic, then $V$ and $W$
are biholomorphically equivalent. The converse fails in multiple ways; see 
\cite{davidson_multipliers_2014}. On the other hand, the results of \cite%
{davidson_isomorphism_2011,hartz_topological_2012} show that if $V,W$ are
homogeneous algebraic varieties in $\mathbb{B}_{d}$ for $d$ finite, then $%
\mathcal{M}_{V}$ and $\mathcal{M}_{W}$ are algebraically isomorphic if and
only if there exists an invertible linear map of $\mathbb{C}^{d}$ mapping $V$
onto $W$. Recently, this result was extended in \cite{hartz_isomorphism_2015}
by studying the algebras $\mathcal{M}_{V}$ from a different point of view.

In this paper, we investigate the classification problem for multiplier
algebras $\mathcal{M}_{V}$ from the perspective of Borel complexity theory.
Our main result is that the classification problem for multiplier algebras $%
\mathcal{M}_{V}$ up to algebraic isomorphism is intractable in the sense of
Borel complexity theory.

\begin{theorem}
\label{Theorem:main}The multiplier algebras $\mathcal{M}_V$, where $V$ is an
operator algebraic variety in $\mathbb{B}_{\infty }$, are not classifiable
by countable structures up to algebraic isomorphism.
\end{theorem}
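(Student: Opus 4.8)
The plan is to exhibit a Borel reduction from an orbit equivalence relation that is known not to be classifiable by countable structures into the isomorphism relation on the algebras $\mathcal{M}_V$. The natural candidate source is an equivalence relation induced by a turbulent Polish group(oid) action; since the paper advertises a general turbulence theory for Polish groupoids, I expect the engine of the argument to be a theorem of the form "if a Polish groupoid action is (generically) turbulent then the associated orbit equivalence relation is not Borel reducible to isomorphism of countable structures.'' So the real work is to realize the classification of operator algebraic varieties in $\mathbb{B}_\infty$ up to $\Aut(\mathbb{B}_\infty)$-conformal equivalence (equivalently, by the Davidson--Ramsey--Shalit type theorem quoted in the introduction, up to algebraic isomorphism of the $\mathcal{M}_V$, at least on a suitable Borel class of varieties of fixed affine codimension) as the orbit equivalence relation of a Polish groupoid, and then to locate inside it a turbulent piece.

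First I would set up the parametrization: fix the affine codimension (say, varieties that are "almost all of $\mathbb{B}_\infty$,'' e.g.\ obtained as zero sets of a single function, or better, varieties of the form $V = \{z : z_n = b_n z_1 \text{ for } n \geq 2\}$ or a Blaschke-type construction adapted to infinitely many variables) so that the relevant conformal maps and the space of varieties form standard Borel spaces and the action/relation is Borel. The abstract that mentions "Blaschke sequences are not smoothly classifiable up to conformal equivalence via automorphisms of the disc'' strongly suggests the key building block: embed a Blaschke sequence $(a_n)$ in the disc as a discrete variety, note that $\Aut(\mathbb{D})$ acts on such sequences by $(a_n) \mapsto (\varphi(a_n))$, and that two sequences give isometrically isomorphic algebras iff they are related by such a $\varphi$ together with a permutation of the sequence. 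Thus the relevant equivalence relation is the orbit equivalence relation of the Polish group $\Aut(\mathbb{D}) \times S_\infty$ (or, more cleanly, a Polish groupoid encoding "$\Aut(\mathbb{D})$ up to relabeling'') acting on the Polish space of Blaschke sequences. In $\mathbb{B}_\infty$ one then spreads infinitely many independent copies of such disc pieces into orthogonal coordinates to promote "not smooth'' to "not classifiable by countable structures,'' which is exactly the kind of product/sum construction where turbulence is typically verified.

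Second, I would prove turbulence of this action. The model to imitate is Hjorth's classical examples: the action of $c_0$ (or $\mathbb{R}^\infty$) on $\mathbb{R}^\mathbb{N}$ by translation, or the action of a product group on a product space, is turbulent because every orbit is meager yet dense in a suitable local sense, and every local orbit is somewhere dense. Here the role of the translation group is played by the infinitely many commuting copies of $\Aut(\mathbb{D})$ acting on the infinitely many coordinate discs, and the Blaschke condition (convergence of $\sum (1 - |a_n|)$) is the analogue of the $c_0$/summability constraint that makes orbits meager. Concretely: density of orbits follows because one can move any finite initial segment of the sequence to any prescribed target using a finite product of disc automorphisms, while perturbing the tail negligibly; meagerness of orbits follows from a Baire-category argument against the Blaschke (or Carleson-type) constraint; and the local-orbit-somewhere-dense condition is the heart of the verification and will be the main obstacle — one must show that an arbitrarily small local perturbation can already reach a dense subset of a neighborhood, which requires a quantitative lemma on how far a composition of small hyperbolic-ball automorphisms of $\mathbb{D}$ can transport a point while staying "small'' in the groupoid metric. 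Once turbulence is established in the groupoid sense, the paper's turbulence theorem for Polish groupoids immediately yields that the orbit equivalence relation is not Borel reducible to isomorphism of countable structures; composing with the Borel reduction from Blaschke sequences to the varieties $V \subset \mathbb{B}_\infty$ and then (via the Agler--McCarthy identification and the fact that algebraic and isometric isomorphism are governed by the same conformal data) to algebraic isomorphism of the $\mathcal{M}_V$ finishes the proof.

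The step I expect to be genuinely delicate is bridging \emph{groupoid} turbulence to the actual equivalence relation on algebras: the conformal equivalence of varieties in $\mathbb{B}_\infty$ does \emph{not} literally come from a Polish group action (the symmetry "group'' of relabelings interacts with $\Aut(\mathbb{B}_\infty)$ in a way that is only naturally a groupoid), so one needs the generalized turbulence machinery precisely to avoid having to present everything as a single Polish group action — and one must check all the measurability and "openness of the action'' hypotheses of that machinery in this analytic setting, where the automorphism group of $\mathbb{B}_\infty$ is infinite-dimensional and its action on sequences of points is only continuous, not proper. Verifying that these hypotheses hold, and that the Blaschke-sequence space with its natural Polish topology makes the orbit map Borel with the required local properties, is where the technical weight of the argument will lie.
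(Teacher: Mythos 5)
There is a genuine gap, and it sits exactly where you invoke ``the fact that algebraic and isometric isomorphism are governed by the same conformal data.'' That is false, and the introduction says so explicitly: the Davidson--Ramsey--Shalit theorem identifies \emph{isometric} isomorphism of $\mathcal{M}_V$ with $\Aut(\mathbb{B}_d)$-conformal equivalence, whereas for \emph{algebraic} isomorphism one only has (under extra hypotheses and for $d<\infty$) an implication towards biholomorphic equivalence, whose converse fails in multiple ways. Worse for your plan, Theorem \ref{Theorem:countable} of the paper shows that conformal equivalence of varieties in $\mathbb{B}_d$ for finite $d$ --- in particular of Blaschke sequences --- is \emph{essentially countable}, hence classifiable by countable structures; so there is no turbulence to be found in that relation, and the Blaschke/Schottky material serves only the separate, strictly weaker lower bound of Theorem \ref{Theorem:countable} ($E_\infty$-hardness), not Theorem \ref{Theorem:main}. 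Spreading infinitely many disc copies into orthogonal coordinates of $\mathbb{B}_\infty$ does not repair this, because you then have no theorem telling you when the resulting multiplier algebras are \emph{algebraically} isomorphic, so you cannot certify that your map is a reduction rather than merely a homomorphism.

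The missing ingredient is a concrete characterization of algebraic isomorphism on some Borel family, and the paper obtains it from a completely different class of examples: unitarily invariant complete Nevanlinna--Pick kernels $K_{\boldsymbol{a}}(z,w)=\sum_n a_n\langle z,w\rangle^n$. By \cite[Corollary 11.6]{hartz_isomorphism_2015}, for admissible log-convex coefficient sequences the algebras $\Mult(\mathcal{H}(K_{\boldsymbol{a}}))$ and $\Mult(\mathcal{H}(K_{\boldsymbol{a}'}))$ are algebraically isomorphic if and only if $\boldsymbol{a}$ and $\boldsymbol{a}'$ have the same growth, i.e.\ $c\le a_n'/a_n\le C$; the Agler--McCarthy universality theorem then realizes each such algebra as $\mathcal{M}_{V_{\boldsymbol{a}}}$ for a variety $V_{\boldsymbol{a}}\subset\mathbb{B}_\infty$ depending Borel-measurably on $\boldsymbol{a}$. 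Turbulence is then verified not for disc automorphisms but for a Polish groupoid of coordinatewise multiplications by a separable group $\Gamma$ of sequences acting partially on $(0,1)^{\mathbb{N}}$, whose orbit equivalence relation is a subrelation of the growth relation with meager classes. Your one correct instinct is the reason a groupoid (rather than a group action) is needed: the action there is only partial, since multiplication can leave $(0,1)^{\mathbb{N}}$ --- but the object it acts on is a sequence space of kernel coefficients, not a configuration space of points in balls.
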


This means that there is no explicit way to classify the multiplier algebras 
$\mathcal{M}_{V}$ using countable structures as complete invariants. In
particular this rules out classification by K-theoretic data, modulo the
routine check that the assignment of its K-theory to a Banach algebra is
given by a Borel map; see \cite[Chapters 5, 8, 9]{blackadar_k-theory_1998}.
A more precise version of the statement of Theorem \ref{Theorem:main} will
be given in Section \ref{Section:complexity}. The proof of Theorem \ref%
{Theorem:main} is presented in Sections \ref{Section:reproducing} and \ref%
{Section:sequences}. In order to prove Theorem \ref{Theorem:main} we develop
in Section \ref{Section:turbulence} the theory of turbulence for Polish
groupoids. This is a generalization of Hjorth's theory of turbulence for
Polish group actions from \cite{hjorth_classification_2000}; see also \cite[%
Chapter 10]{gao_invariant_2009}.

We also study the (completely) isometric classification problem for
multiplier algebras $\mathcal{M}_V$ associated with operator algebraic
varieties in $\mathbb{B}_{d}$ with $d$ finite. In view of the results
mentioned above, this amounts to classifying operator algebraic varieties in 
$\mathbb{B}_{d}$ up to $\Aut(\mathbb{B}_d)$-conformal equivalence. We are
able to exactly determine the complexity of such a task.

\begin{theorem}
\label{Theorem:countable}For any $d\in \mathbb{N}$, the relation of $\Aut(%
\mathbb{B}_{d})$-conformal equivalence of operator algebraic varieties in $%
\mathbb{B}_{d}$ is essentially countable, and has maximum complexity among
essentially countable equivalence relations.
\end{theorem}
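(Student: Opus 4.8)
The plan is to establish the two halves separately: essential countability, and maximal complexity among essentially countable equivalence relations. For essential countability, the key point is that $\Aut(\mathbb{B}_d)$-conformal equivalence can be realized as the orbit equivalence relation of a Borel action of a \emph{locally compact} Polish group, namely $\Aut(\mathbb{B}_d)$ itself (which for $d$ finite is a finite-dimensional Lie group, hence locally compact and even $\sigma$-compact). First I would fix a standard Borel space parametrizing operator algebraic varieties in $\mathbb{B}_d$; the natural choice is to code a variety $V$ by a countable dense subset of $V$, or equivalently by the closed set $V$ itself viewed as a point of the Effros Borel space $F(\mathbb{B}_d)$, after checking that the collection of operator algebraic varieties is Borel in $F(\mathbb{B}_d)$. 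The action of $\Aut(\mathbb{B}_d)$ on $F(\mathbb{B}_d)$ by $F\cdot V = F(V)$ is Borel, and by the results quoted from \cite{davidson_operator_2015, salomon_isomorphism_2014}, the induced orbit equivalence relation restricted to operator algebraic varieties is exactly $\Aut(\mathbb{B}_d)$-conformal equivalence. Since orbit equivalence relations of Borel actions of locally compact (indeed Polish locally compact, hence admitting no perfect ``large'' stabilizer obstructions) groups are essentially countable — this is a theorem of Kechris — the first assertion follows.

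For the maximal complexity assertion, I would produce a Borel reduction from an equivalence relation already known to be of maximum complexity among essentially countable ones into $\Aut(\mathbb{B}_d)$-conformal equivalence. The canonical such target is $E_\infty$, the maximal countable Borel equivalence relation, or more conveniently the orbit equivalence relation of a free Borel action of $\mathbb{F}_2$ (or of $\mathrm{PSL}_2(\mathbb{Z})$, which is arithmetic and sits inside $\mathrm{PSL}_2(\mathbb{R}) \cong \Aut(\mathbb{B}_1)$). This is where the case $d=1$ is natural to treat first: $\Aut(\mathbb{B}_1)$ is the Möbius group, so one can take a lattice $\Gamma < \Aut(\mathbb{B}_1)$ (for instance a free Fuchsian group) and encode a point of a standard probability space on which $\Gamma$ acts essentially freely — such as the boundary action, or a Bernoulli shift — as a countable $\Gamma$-invariant configuration of points inside $\mathbb{B}_1$, so that two configurations are $\Aut(\mathbb{B}_1)$-conformally equivalent precisely when the original points lie in the same $\Gamma$-orbit. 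Concretely, given $x$ in the base space one associates the Blaschke-type sequence obtained as the $\Gamma$-orbit of a base point together with decoration encoding $x$; injectivity of the reduction on orbit equivalence classes requires rigidity, i.e.\ that any ambient automorphism conjugating one configuration to another must already lie in $\Gamma$, which one arranges by making the configurations ``thick'' enough (e.g.\ not contained in any proper sub-orbit and with trivial setwise stabilizer outside $\Gamma$). For general $d$ one embeds the $d=1$ construction via the inclusion $\mathbb{B}_1 \hookrightarrow \mathbb{B}_d$ as a coordinate disc and checks that an automorphism of $\mathbb{B}_d$ carrying one such planar configuration to another restricts to an automorphism of the disc, using that the configurations determine the disc (e.g.\ via their linear span or affine hull).

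The main obstacle, I expect, is the rigidity step in the reduction: proving that the only automorphisms of $\mathbb{B}_d$ relating two of the constructed configurations are the ones coming from the acting group $\Gamma$, with no spurious extra identifications. This requires choosing the configurations carefully — dense enough, or with enough combinatorial asymmetry, that their setwise stabilizer in $\Aut(\mathbb{B}_d)$ is exactly the copy of $\Gamma$ and that a conjugacy of configurations forces a conjugacy in $\Gamma$. A secondary technical point is verifying Borelness throughout: that ``is an operator algebraic variety'' is a Borel condition on $F(\mathbb{B}_d)$, and that the assignment $x \mapsto (\text{configuration encoding } x)$ is Borel. Both should be manageable — the former because membership in $H^2_d$ and the vanishing-set operation are given by Borel formulas, the latter because the $\Gamma$-orbit of a point depends Borel-measurably on the data — but they are the places where care is needed. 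The statement that Blaschke sequences are not smoothly classifiable up to conformal equivalence via disc automorphisms is then an immediate corollary of the $d=1$ case, since a maximally complex essentially countable equivalence relation is in particular non-smooth.
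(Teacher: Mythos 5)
Your overall architecture matches the paper's: essential countability via Kechris's theorem applied to the Borel action of the locally compact group $\Aut(\mathbb{B}_d)$ on the Effros Borel space of varieties, a reduction of the general case to $d=1$ using the fact that automorphisms preserve affine spans, and a lower bound obtained by encoding the Bernoulli action of $F_2$ into $\Gamma$-orbit configurations in the disc. However, there is a genuine error in your choice of $\Gamma$. You propose to take ``a lattice $\Gamma < \Aut(\mathbb{B}_1)$ (for instance a free Fuchsian group)'' and use $\Gamma$-orbits as the building blocks of the configurations. For a lattice the sum $\sum_{g\in\Gamma}(1-|g(z)|)$ diverges (the limit set is all of $\partial\mathbb{D}$ and the group is of divergence type), so the orbit of a point is \emph{not} a Blaschke sequence and hence is not an operator algebraic variety in $\mathbb{D}$ --- the zero sets of nonzero $H^2$ functions are exactly the sequences satisfying the Blaschke condition. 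This is precisely the pitfall the paper warns about: the copy of $F_2$ inside $\mathrm{PSL}_2(\mathbb{Z})$ used in Hjorth's original argument has finite index in $\mathrm{PSL}_2(\mathbb{Z})$ and its orbits fail the Blaschke condition. The paper's key analytic input, which your proposal is missing, is Proposition \ref{prop:schottky}: one must choose $\Gamma$ to be a genuine Schottky group (infinite covolume, Dirichlet domain whose closure contains nontrivial boundary arcs), and then Tsuji's theorem yields $\sum_{g\in\Gamma}(1-|g(z)|)<\infty$ for every $z$.

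The second gap is the rigidity step, which you correctly identify as the main obstacle but leave unresolved. The paper's concrete mechanism is to take four nearby base points whose pairwise pseudo-hyperbolic distances are all distinct, form the four orbit sequences $B_0,\dots,B_3$, and encode $A\subset F_2$ as $V_A=B_0\cup B_1\cup B_2\cup\{x_g^{(3)}:g\in A\}$; separation estimates force any $\theta\in\Aut(\mathbb{D})$ with $\theta[V_A]=V_B$ to send the triple $\{x_1^{(0)},x_1^{(1)},x_1^{(2)}\}$ into a single cluster, the distinct-distance condition (Lemma \ref{lem:triangle}) pins down where each point goes, and then $\theta$ agrees with some $g\in\Gamma$ on three points and hence equals $g$ because a M\"obius transformation is determined by three points. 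Your suggestion of making the configuration ``thick enough with trivial setwise stabilizer outside $\Gamma$'' gestures at this but does not supply the argument; in particular the three-points-determine-a-M\"obius-map fact is the place where spurious automorphisms are actually excluded.
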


In particular, Theorem \ref{Theorem:countable} shows that the $\Aut(\mathbb{B%
}_{d})$-conformal equivalence classes of operator algebraic varieties in $%
\mathbb{B}_{d}$ cannot be explicitly parametrized by the points of a Polish
spaces. In other words the relation of $\Aut(\mathbb{B}_{d})$-conformal
equivalence of operator algebraic varieties in $\mathbb{B}_{d}$ is not
smooth. In fact, any class of complete invariants would have to be as
complex as conceivable. We will explain in more detail the content of
Theorem \ref{Theorem:countable} in Section \ref{Section:complexity}.

The rest of this article is organized as follows. In Section \ref%
{Section:complexity} we will recall the basic notions and results from Borel
complexity theory that we will refer to. Section \ref{Section:reproducing}
gives a short introduction to reproducing kernel Hilbert spaces and their
multiplier algebras. Here Theorem \ref{Theorem:main} is reduced to
establishing a nonclassification result for sequences in $(0,1]$ up to $\ell
^{\infty }$-equivalence. This is established in Section \ref%
{Section:sequences} by means of turbulence theory for Polish groupoids,
developed in Section \ref{Section:turbulence}. We conclude in Section \ref%
{Section:conformal} with the proof of Theorem \ref{Theorem:countable}.

\paragraph{Acknowledgments}

We would like to thank the anonymous referee for carefully reviewing the
paper and providing a large number of useful comments.

\section{Borel complexity theory\label{Section:complexity}}

\emph{Borel complexity theory} studies the relative complexity of
classification problems in mathematics, and offers tools to detect and prove
obstructions to classification. In this framework, a classification problem
is regarded as an \emph{equivalence relation} on a \emph{standard Borel space%
}. Perhaps after a suitable parametrization, this covers most of
classification problems in mathematics. For example, operator algebraic
varieties in $\mathbb{B}_{d}$ for $d\in \mathbb{N}\cup \left\{ \infty
\right\} $ are a collection $\mathcal{V}_{d}$ of nonempty closed subsets of $%
\mathbb{B}_{d}$. We will verify in the appendix that $\mathcal{V}_{d}$ is a
Borel subset of the space of nonempty closed subsets of $\mathbb{B}_{d}$
endowed with the Effros Borel structure \cite[Section 12.C]%
{kechris_classical_1995}. This shows that operator algebraic varieties form
a standard Borel space when endowed with the induced Borel structure \cite[%
Proposition 12.1]{kechris_classical_1995}. The relation of $\Aut(\mathbb{B}%
_{d})$-conformal equivalence of varieties in $\mathbb{B}_{d}$ can then be
regarded as an equivalence relation on this standard Borel space. Similarly,
the multiplier algebras $\mathcal{M}_{V}$ are naturally parametrized by the
varieties themselves, and one can regard algebraic isomorphisms of the
algebras $\mathcal{M}_{V}$ as an equivalence relation on the standard Borel
space of varieties described above.

Borel complexity theory aims at comparing the complexity of different
classification problems.\ The fundamental notion of comparison is Borel
reducibility. If $E$ and $F$ are equivalence relations on standard Borel
spaces $X$ and $Y$ respectively, then a \emph{Borel reduction} from $E$ to $%
F $ is a Borel function $f:X\rightarrow Y$ with the property that 
\begin{equation*}
f(x){}F{}f(x^{\prime })\quad \text{if and only if}\quad x{}E{}x^{\prime }
\end{equation*}%
for every $x,x^{\prime }\in X$. The relation $E$ is \emph{Borel reducible }%
to $F$---in formulas $E\leq _{B}F$---if there exists a Borel reduction from $%
E$ to $F$. This amounts to saying that one can assign to the elements of $X$
complete invariants up to $E$ that are $F$-equivalence classes, and moreover
such an assignment is \emph{constructive }in the sense that it is given by a
Borel map at the level of the spaces. We say that $E$ and $F$ are Borel
bireducible, and write $E\sim _{B}F$, if $E\leq _{B}F$ and $F\leq _{B}E$.
The notion of Borel reducibility was first introduced in \cite[Definition 2]%
{friedman_borel_1989}. A complete survey on Borel complexity theory can be
found in \cite{gao_invariant_2009}.

Some distinguished equivalence relations are used as benchmarks of
complexity to draw a hierarchy of classification problems is mathematics.
The first natural benchmark is provided by the relation $=_{\mathbb{R}}$ of
equality of real numbers. An equivalence relation is \emph{smooth }if it is
Borel reducible to $=_{\mathbb{R}}$. (One can replace $\mathbb{R}$ with any
other standard Borel space \cite[Theorem 15.6]{kechris_classical_1995}.) For
example, the relation of isomorphism of locally finite \emph{rooted} trees
is smooth \cite[Theorem 13.2.3]{gao_invariant_2009}.

Smooth equivalence relations represent the lowest level complexity. A more
ample class is given by considering Borel equivalence relations that are
countable or essentially countable. An equivalence relation $E$ on a
standard Borel space $X$ is \emph{Borel }if it is a Borel subset of the
product $X\times X$. A Borel equivalence relation $E$ is \emph{countable }if
its classes are countable, and \emph{essentially countable }if it is Borel
reducible to a countable one. Clearly, a smooth equivalence relation is, in
particular, essentially countable. The relation $E_{0}$ of tail equivalence
of binary sequences is countable but not smooth \cite[Subsection 6.1]%
{gao_invariant_2009}. More generally the orbit equivalence relation of a
Borel action of a countable group on a standard Borel space is countable.
There exists a countable Borel equivalence relation $E_{\infty }$ that has
maximum complexity among (essentially) countable Borel equivalence
relations. One can describe $E_{\infty }$ as the relation of isomorphism of
locally finite trees or graphs \cite[Theorem 13.2.4]{gao_invariant_2009}. In
the proof of Theorem \ref{Theorem:countable}, we will use the following
equivalent description of $E_{\infty }$. Let $F_{2}$ be the free group on
two generators and $\left\{ 0,1\right\} ^{F_{2}}$ the space of subsets of $%
F_{2}$ endowed with the product topology. The group $F_{2}$ naturally acts
on $\left\{ 0,1\right\} ^{F_{2}}$ by translation. The corresponding orbit
equivalence relation $E(F_{2},2)$ is Borel bireducible with $E_{\infty }$ 
\cite[Theorem 7.3.8]{gao_invariant_2009}.

A more generous notion of classifiability for equivalence relations is being 
\emph{classifiable by countable structures}. An equivalence relation is
classifiable by countable structures if it is Borel reducible to the
relation of isomorphism within some Borel class of structures in some first
order language. Equivalently an equivalence relation is classifiable by
countable structures if it is Borel reducible to the orbit equivalence
relation of a continuous action of $S_{\infty }$ on a Polish space \cite[%
Section 3.6]{gao_invariant_2009}. The Polish group $S_{\infty }$ is the
group of permutations of $\mathbb{N}$ with the topology of pointwise
convergence \cite[Section 2.4]{gao_invariant_2009}. Any (essentially)
countable equivalence relation is in particular classifiable by countable
structures \cite[Lemma 2.4, Lemma 2.5]{hjorth_complexity_2000}. Again, there
exists an equivalence relation of maximum complexity among those that are
classifiable by countable structures. Such an equivalence relation can be
described, for instance, as the relation of isomorphism of countable trees
or graphs \cite[Theorem 1]{friedman_borel_1989}.

\section{Kernels and multiplier algebras\label{Section:reproducing}}

A \emph{reproducing kernel Hilbert space} is a Hilbert space $\mathcal{H}$
which consists of functions on a set $X$ such that for every $w\in X$, the
functional 
\begin{equation*}
f\mapsto f(w)
\end{equation*}%
on $\mathcal{H}$ is bounded. Thus, there exists $K_{w}\in \mathcal{H}$ such
that 
\begin{equation*}
f(w)=\langle f,K_{w}\rangle _{\mathcal{H}}
\end{equation*}%
for all $f\in \mathcal{H}$. The function $K:X\times X\rightarrow \mathbb{C}$
defined by $K(z,w)=K_{w}(z)$ is called the reproducing kernel of $\mathcal{H}
$, and it is easy to check that $K$ is positive semi-definite in the sense
that for any $n\in \mathbb{N}$ and any collection of points $x_{1},\ldots
,x_{n}\in X$, the $n\times n$-matrix 
\begin{equation*}
\Big (K(x_{i},x_{j})\Big)_{i,j=1}^{n}
\end{equation*}%
is positive semi-definite. Conversely, if $K$ is a \emph{kernel} on $X$,
that is, $K:X\times X\rightarrow \mathbb{C}$ is positive semi-definite, then
there exists a unique reproducing kernel Hilbert space $\mathcal{H}(K)$ of
functions on $X$ such that $K$ is the reproducing kernel of $\mathcal{H}(K)$%
, see for example \cite[Section 2.2]{agler_pick_2002}. If $K$ is a kernel on 
$X$ and $Y\subset X$, then we denote by $K|_{Y}$ the kernel on $Y$ given by
the restriction of $K$ to $Y\times Y$. A kernel is \emph{irreducible }if for
any $z,w\in X$ one has that $K\left( z,w\right) \neq 0$, and furthermore if $%
z\neq w$ then $K_{z}$ and $K_{w}$ are linearly independent.

The \emph{multiplier algebra }of the reproducing kernel Hilbert space $%
\mathcal{H}$ is defined by 
\begin{equation*}
\Mult(\mathcal{H})=\{\varphi :X\rightarrow \mathbb{C}:\varphi \cdot f\in 
\mathcal{H}\text{ for all }f\in \mathcal{H}\}.
\end{equation*}%
It is a standard consequence of the closed graph theorem that for $\varphi
\in \Mult(\mathcal{H})$, the operator $M_{\varphi }$ on $\mathcal{H}$
defined by $M_{\varphi }f=\varphi \cdot f$ is bounded. We will always assume
that $1\in \mathcal{H}$, so that we may identify an element $\varphi $ of $%
\Mult(\mathcal{H})$ with its multiplication operator $M_{\varphi }$. This
identification endows $\Mult(\mathcal{H})$ with the structure of a
non-selfadjoint algebra of operators on $\mathcal{H}$. A good reference for
reproducing kernel Hilbert spaces and their multiplier algebras is the book 
\cite{agler_pick_2002}.

We will be interested in kernels with the \emph{complete Nevanlinna-Pick }%
(NP) \emph{property}. A kernel $K$ on a set $X$ is said to have the
Nevanlinna-Pick property if, given points $z_{1},\ldots ,z_{n}\in X$ and
complex numbers $\lambda _{1},\ldots ,\lambda _{n}\in \mathbb{C}$, the
existence of a multiplier $\varphi $ on $\mathcal{H}$ with $||M_{\varphi
}||\leq 1$ and 
\begin{equation*}
\varphi (z_{i})=\lambda _{i}\quad (i=1,\ldots ,n)
\end{equation*}%
is equivalent to positive semi-definiteness of the matrix 
\begin{equation*}
\Big(K(z_{i},z_{j})(1-\lambda _{i}\overline{\lambda _{j}})\Big)_{i,j=1}^{n}.
\end{equation*}%
The kernel is said to have the complete Nevanlinna-Pick property if the same
result holds for matrix-valued interpolation of arbitrary matrix size. More
information on the complete Nevanlinna-Pick property can be found in \cite[%
Section 5]{agler_pick_2002}. If $K$ satisfies the complete Nevanlinna-Pick
property, then $\mathcal{H}(K)$ is said to be a \emph{complete
Nevanlinna-Pick space}. We say that $\mathcal{H}(K)$ is irreducible if $K$
is irreducible.

Perhaps the most important example of a complete Nevanlinna-Pick space is
the \emph{Drury-Arveson space}. For $d\in \mathbb{N}\cup \left\{ \infty
\right\} $, we denote the Drury-Arveson kernel on $\mathbb{B}_{d}$ by 
\begin{equation*}
A_{d}\left( z,w\right) =\frac{1}{1-\left\langle z,w\right\rangle }.
\end{equation*}%
This kernel has the complete NP property, and $H_{d}^{2}$ is the reproducing
kernel Hilbert space associated with the space $\mathcal{H}(A_{d})$ (see 
\cite[Section 8.1]{agler_pick_2002}). If $V\subset \mathbb{B}_{d}$ then the
restriction $A_{d}|_{V}$ of the Drury-Arveson kernel is a kernel on $V$. The
associated complete NP space $H_{d}^{2}|_{V}$ is the closed linear span of
functions on $V$ of the form $z\mapsto \frac{1}{1-\left\langle
z,w\right\rangle }$ for $w\in V$. It is a consequence of the Nevanlinna-Pick
property that every multiplier of $H_{d}^{2}|_{V}$ is the restriction of a
multiplier of $H_{d}^{2}$ to $V$, thus the multiplier algebra $\Mult%
(H_{d}^{2}|_{V})$ coincides with the algebra $\mathcal{M}_{V}$ defined in
the introduction. In particular, this endows $\mathcal{M}_{V}$ with the
structure of a non-selfadjoint operator algebra. According to a theorem of
Agler and McCarthy, $H_{\infty }^{2}$ is universal in the sense that every
separable irreducible complete Nevanlinna-Pick space can be identified with $%
H_{\infty }^{2}|_{V}$ for some operator algebraic variety $V\subset \mathbb{B%
}_{\infty }$, see \cite[Section 8]{agler_pick_2002}. The article \cite%
{shalit_operator_2013} offers a comprehensive survey on the Drury-Arveson
space and its properties.

We will observe in the appendix that the collection $\mathcal{V}_{\infty }$
of operator algebraic varieties $V\subset \mathbb{B}_{\infty }$ is a Borel
subset of the space of nonempty closed subspaces of $\mathbb{B}_{\infty }$
endowed with the Effros Borel structure \cite[Section 12.C]%
{kechris_classical_1995}. Therefore we can state more precisely Theorem \ref%
{Theorem:main} as follows.

\begin{theorem}
\label{Theorem:main2}The equivalence relation on the space $\mathcal{V}%
_{\infty }$ of operator algebraic varieties in $\mathcal{7}\mathbb{B}%
_{\infty }$ defined by $V\sim W$ if and only if $\mathcal{M}_{V}$ and $%
\mathcal{M}_{W}$ are algebraically isomorphic is not classifiable by
countable structures.
\end{theorem}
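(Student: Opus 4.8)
The plan is to reduce the non-classification result to a concrete non-classification statement about sequences, namely that sequences in $(0,1]$ are not classifiable by countable structures up to $\ell^\infty$-equivalence, where two sequences $(a_n)$ and $(b_n)$ are $\ell^\infty$-equivalent if there is a permutation $\sigma$ of $\mathbb{N}$ with $(a_{\sigma(n)}/b_n)$ and $(b_n/a_{\sigma(n)})$ both bounded. First I would set up the Borel map taking a sequence $(a_n)$ in $(0,1]$ to an operator algebraic variety in $\mathbb{B}_\infty$. The natural candidate is a weighted countable discrete set: given $(a_n)$, choose points $w_n \in \mathbb{B}_\infty$ whose pairwise "distances" in the Drury-Arveson metric encode the $a_n$ in such a way that the restriction of the Drury-Arveson kernel $A_\infty$ to $V = \overline{\{w_n : n \in \mathbb{N}\}}$ is irreducible and the multiplier algebra $\mathcal{M}_V$ recovers, up to algebraic isomorphism, exactly the $\ell^\infty$-equivalence class of $(a_n)$. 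Concretely, one takes $w_n$ to lie along orthogonal directions so that $A_\infty(w_m, w_n) = 1$ for $m \neq n$ and the "weight" of $w_n$ is governed by $\|w_n\|$, which is chosen as a function of $a_n$; this makes $V$ a disjoint union of "atoms" and forces $\mathcal{M}_V \cong \ell^\infty$-type algebra whose isomorphism type remembers the weights up to permutation and bounded rescaling.

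The key steps, in order, are as follows. (1) Verify that the assignment $(a_n) \mapsto V$ is Borel as a map into the Effros-Borel space of closed subsets of $\mathbb{B}_\infty$, and that $V$ is indeed an operator algebraic variety — this uses the appendix's Borelness of $\mathcal{V}_\infty$ together with the explicit formula for the $w_n$. (2) Compute $\mathcal{M}_V$: since the $w_n$ are "far apart" in the kernel sense, $H_\infty^2|_V$ decomposes nicely and $\mathcal{M}_V$ is an algebra of functions on the discrete set $\{w_n\}$ (plus limit points) whose norm is a weighted sup-norm; one shows that a bounded function on $\{w_n\}$ is a multiplier iff it satisfies a weighted boundedness condition with weights $c_n = c_n(a_n)$. (3) Show that an algebraic isomorphism $\mathcal{M}_V \to \mathcal{M}_W$ must be induced by a bijection of the underlying point sets (using that algebraic isomorphisms of such semisimple commutative Banach algebras implement homeomorphisms of maximal ideal spaces, and that the isolated points are preserved), and that matching up the point sets forces the weights to be equivalent in the $\ell^\infty$ sense — hence $V \sim W$ in the above equivalence relation iff $(a_n)$ and $(b_n)$ are $\ell^\infty$-equivalent. (4) Invoke the result of Section~\ref{Section:sequences} (proved via turbulence for Polish groupoids, Section~\ref{Section:turbulence}) that $\ell^\infty$-equivalence of sequences in $(0,1]$ is not classifiable by countable structures. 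Composing the Borel reduction from (1)--(3) with this gives the theorem.

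The main obstacle I expect is step~(3): controlling algebraic — as opposed to isometric — isomorphisms. Algebraic isomorphisms of $\mathcal{M}_V$ and $\mathcal{M}_W$ need not be bounded a priori, and even once one knows (from commutativity and semisimplicity, so automatic continuity in the Banach-algebra category) that they are bounded, one must argue that they respect the fine structure encoding the weights $a_n$ rather than merely the topology of the character space. The delicate point is that an algebraic isomorphism gives a homeomorphism of maximal ideal spaces and a uniform two-sided bound $\|\psi\| \cdot \|\psi^{-1}\|$ on the induced map of function algebras, and one must extract from this bound precisely the $\ell^\infty$-equivalence of the weight sequences; this is where the geometry of the chosen points $w_n$ must be engineered carefully so that the multiplier norm's dependence on the weights is rigid enough to survive a bounded isomorphism but flexible enough that $\ell^\infty$-equivalent weights do give isomorphic algebras. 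The turbulence machinery of Sections~\ref{Section:turbulence} and~\ref{Section:sequences} then supplies the genuinely new ingredient, since $\ell^\infty$-equivalence is an orbit equivalence relation of a Polish groupoid (permutations acting with rescaling) rather than of a Polish group, which is exactly why the classical Hjorth turbulence theory must be extended.
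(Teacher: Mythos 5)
Your overall architecture --- reduce to a non-classifiability statement for sequences in $(0,1]$ under a bounded-ratio equivalence, and prove that statement by turbulence for Polish groupoids --- is the paper's, but the concrete reduction you propose in steps (1)--(3) is not the one the paper uses, and as described it contains a genuine gap. The paper does not take $V$ to be a countable discrete set. It takes $V_{\boldsymbol a}=j_{\boldsymbol a}[\mathbb{B}_d]$ to be an embedded copy of a \emph{ball}, chosen via the Agler--McCarthy universality theorem so that $\mathcal{M}_{V_{\boldsymbol a}}$ is completely isometrically isomorphic to $\Mult(\mathcal{H}(K_{\boldsymbol a}^{(d)}))$ for a unitarily invariant complete Nevanlinna--Pick kernel with coefficient sequence $\boldsymbol a$; the entire analytic content of your step (3) --- that an algebraic isomorphism of the algebras forces bounded equivalence of the sequences --- is then supplied by the equivalence of (ii) and (iii) in \cite[Corollary 11.6]{hartz_isomorphism_2015}, a substantial theorem which is exactly what makes the reduction work. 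Your sketch (automatic continuity plus a homeomorphism of maximal ideal spaces) does not come close to replacing it.

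Worse, the discrete construction you propose is likely to collapse. If the $w_n$ lie along orthogonal directions with $\|w_n\|=r_n$, the normalized kernel functions have pairwise inner products $\sqrt{(1-r_m^2)(1-r_n^2)}$, so when $\sum_n(1-r_n^2)<\infty$ the normalized Grammian is bounded above and below and $\{w_n\}$ is an interpolating sequence for $\Mult(H_\infty^2)$; consequently $\mathcal{M}_V$ is isomorphic as a Banach algebra to $\ell^\infty$, \emph{independently of the weights}, and the isomorphism class of $\mathcal{M}_V$ remembers nothing of $(a_n)$. (Note also that for irreducible NP kernels one always has $\|\varphi\|_\infty\le\|M_\varphi\|$, so the multiplier norm on a restriction algebra can never be a weighted sup-norm with unbounded weights.) A secondary discrepancy: the relation the paper reduces from is ``same growth'' with no permutation of indices ($c\le a_n'/a_n\le C$ for all $n$); building a permutation into your target relation both weakens the algebraic rigidity you need in step (3) and changes the descriptive-set-theoretic statement you must prove in step (4), since non-classifiability does not automatically pass from a finer relation to a coarser one.
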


An important class of irreducible complete NP kernels are the unitarily
invariant complete NP kernels on $\mathbb{B}_{d}$. By definition, these
kernels admit a power series representation 
\begin{equation*}
K_{\boldsymbol{a}}^{(d)}(z,w)=\sum_{n=0}^{\infty }a_{n}\langle z,w\rangle
^{n}
\end{equation*}%
for $z,w\in \mathbb{B}_{d}$, where $\boldsymbol{a}=(a_{n})$ is a sequence of
positive numbers such that $a_{0}=1$, the power series $\sum_{n=0}^{\infty
}a_{n}t^{n}$ has radius of convergence $1$, $\sum_{n=0}^{\infty
}a_{n}=\infty $, and there exists a sequence $\boldsymbol{b}=(b_{n})$ of
non-negative numbers such that 
\begin{equation}
\sum_{n=0}^{\infty }a_{n}t^{n}=\frac{1}{1-\sum_{n=1}^{\infty }b_{n}t^{n}},
\label{eqn:NP}
\end{equation}%
see \cite[Section 7]{hartz_isomorphism_2015}. We let $\mathcal{A}\subset
(0,\infty )^{\mathbb{N}}$ denote the set of such sequences. It is not
difficult to see that the set $\mathcal{B}$ of pairs $\left( \boldsymbol{a},%
\boldsymbol{b}\right) $ of sequences satisfying \eqref{eqn:NP} is a Borel
subset of $\left( 0,\infty \right) ^{\mathbb{N}}\times \left[ 0,\infty
\right) ^{\mathbb{N}}$. From this and the fact that every section of $%
\mathcal{B}$ has cardinality at most $1$, it follows that $\mathcal{A}$ is
the range of an injective Borel map and hence Borel by \cite[Corollary 15.2]%
{kechris_classical_1995}.

It follows from the universality result of Agler and McCarthy mentioned
above that for every $\boldsymbol{a}\in \mathcal{A}$, there exists a variety 
$V_{\boldsymbol{a}}\subset \mathbb{B}_{\infty }$ such that $\mathcal{M}_{V_{%
\boldsymbol{a}}}$ is completely isometrically isomorphic to $\Mult(\mathcal{H%
}(K_{\boldsymbol{a}}^{(d)}))$. Furthermore the variety $V_{\boldsymbol{a}}$
can be taken as the image of $\mathbb{B}_{d}$ under an embedding $j_{%
\boldsymbol{a}}:\mathbb{B}_{d}\rightarrow \mathbb{B}_{\infty }$. The
discussion preceding Proposition 11.8 in \cite{hartz_isomorphism_2015} shows
that the embedding $j_{\boldsymbol{a}}$ can be explicitly defined in terms
of $\boldsymbol{a}$. One can use this to show that there exists a Borel map $%
\boldsymbol{a}\mapsto V_{\boldsymbol{a}}$ from $\mathcal{A}$ to $\mathcal{V}%
_{\infty }$ such that $\mathcal{M}_{V_{\boldsymbol{a}}}$ is completely
isometrically isomorphic to $\Mult(H(K_{\boldsymbol{a}}^{(d)}))$. Therefore,
in order to establish Theorem \ref{Theorem:main2}, it is enough to prove the
following result.

\begin{theorem}
\label{Theorem:nonclassification} Let $d\in \mathbb{N}$. The relation $\sim
_{d}$ on the space $\mathcal{A}$ defined by $\boldsymbol{a}\sim _{d}%
\boldsymbol{a}^{\prime }$ if and only if $\Mult(H(K_{\boldsymbol{a}}^{(d)}))$
and $\Mult(H(K_{\boldsymbol{a}\mathbf{^{\prime }}}^{(d)}))$ are
algebraically isomorphic is not classifiable by countable structures.
\end{theorem}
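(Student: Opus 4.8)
The plan is to reduce the problem to a concrete non-classification statement about sequences in $(0,1]$, and then to attack that statement using turbulence for Polish groupoids. The first step is to identify, for each $\boldsymbol{a}\in\mathcal{A}$, an invariant that controls when the multiplier algebras $\Mult(\mathcal{H}(K_{\boldsymbol{a}}^{(d)}))$ are algebraically isomorphic. Using the results of \cite{hartz_isomorphism_2015}---in particular the correspondence between unitarily invariant complete NP kernels and their restrictions to varieties---one should be able to show that $\Mult(\mathcal{H}(K_{\boldsymbol{a}}^{(d)}))$ and $\Mult(\mathcal{H}(K_{\boldsymbol{a}'}^{(d)}))$ are algebraically isomorphic precisely when the varieties $V_{\boldsymbol{a}}$ and $V_{\boldsymbol{a}'}$ are biholomorphically equivalent, and that this in turn is governed by a ``rescaling'' relation on the defining data of $\boldsymbol{a}$. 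Concretely, I expect the relevant invariant to be encoded by an $\ell^{\infty}$-type equivalence: two sequences $\boldsymbol{s},\boldsymbol{s}'$ in $(0,1]$ (extracted from the Blaschke-type zero sets of the embeddings $j_{\boldsymbol{a}}$, or from the coefficient sequences $\boldsymbol{b}$) are equivalent iff there is a bijection $\pi$ of $\mathbb{N}$ and a constant $C\geq 1$ with $C^{-1}\leq s_{\pi(n)}/s'_n\leq C$ for all $n$, possibly after accounting for finitely many exceptional coordinates. This reduction must be shown to be implemented by a Borel map $\mathcal{A}\to(0,1]^{\mathbb{N}}$, which is routine once the explicit formulas from \cite{hartz_isomorphism_2015} are in hand.

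The second step is to prove that this $\ell^{\infty}$-equivalence relation on sequences in $(0,1]$ is not classifiable by countable structures. Here the natural object is not a Polish group action but a Polish \emph{groupoid}: one takes the groupoid whose objects are sequences, with a morphism from $\boldsymbol{s}$ to $\boldsymbol{s}'$ being a pair $(\pi,C)$ (or a sequence of ratios) witnessing the equivalence. The orbit equivalence relation of this groupoid is exactly $\ell^{\infty}$-equivalence. The strategy is then to run Hjorth's turbulence argument in the groupoid setting: one builds the theory of turbulence for Polish groupoids in Section~\ref{Section:turbulence}, establishing the analogue of Hjorth's theorem that a generically turbulent Polish groupoid action cannot be classified by countable structures (this passes through a suitable version of the ``third dichotomy'' / the fact that $S_{\infty}$-orbit equivalence relations are meager-to-one on turbulent orbits). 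Then one must exhibit a Polish space $Y$ of sequences on which the action is generically turbulent: the orbits should be dense (achieved by the multiplicative flexibility of the ratios $s_{\pi(n)}/s'_n$, which lets one approximate any sequence by perturbing a given one on cofinitely many coordinates), the local orbits should be somewhere dense, and yet orbits should be meager (so that the equivalence classes are not $G_\delta$). A careful choice of $Y$---say sequences decaying at a controlled rate, or sequences in a fixed $\ell^{p}$-ball with $p<\infty$---should make all three conditions simultaneously verifiable.

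The third step is bookkeeping: one must arrange that the distinguished turbulent Polish space $Y$ actually sits (Borel-reducibly) inside $\mathcal{A}$ via the correspondence from Step~1, i.e., that every sequence in $Y$ arises as (part of) the data of some $\boldsymbol{a}\in\mathcal{A}$, and that the identification respects $\ell^{\infty}$-equivalence on the $Y$ side and $\sim_d$ on the $\mathcal{A}$ side. Composing the Borel reduction $Y\hookrightarrow\mathcal{A}$ with the (hypothetical) classification of $\sim_d$ by countable structures would then classify the turbulent relation by countable structures, contradicting Step~2. I expect the main obstacle to be Step~2, and within it the construction of the generically turbulent action: verifying \emph{simultaneously} that the local orbits are somewhere dense and that the global orbits remain meager is always the delicate balancing act in turbulence arguments, and doing this in the groupoid framework---where one does not have a single acting group and must instead control a family of partial isomorphisms---will require the groupoid turbulence machinery of Section~\ref{Section:turbulence} to be set up with exactly the right genericity and orbit-equivalence hypotheses. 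A secondary technical point is making sure the algebraic-isomorphism invariant from Step~1 is genuinely the $\ell^{\infty}$-equivalence relation and not something coarser or finer; this relies essentially on the fine structure of multiplier algebras of complete NP spaces established in \cite{hartz_isomorphism_2015}.
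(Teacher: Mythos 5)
Your high-level architecture matches the paper's: reduce $\sim_d$ to a growth-type equivalence on coefficient sequences via the isomorphism results of \cite{hartz_isomorphism_2015}, then obtain non-classifiability from turbulence for Polish groupoids. But two of your concrete steps have genuine gaps. First, the invariant in Step 1 is not a permutation-equivalence of auxiliary sequences: the paper restricts to the Borel subclass $\mathcal{A}_0\subset\mathcal{A}$ of \emph{admissible log-convex} sequences (where $(a_n/a_{n+1})$ is nonincreasing and tends to $1$ and $\sum a_n=\infty$), on which \cite[Corollary 11.6]{hartz_isomorphism_2015} says that $\Mult(\mathcal{H}(K_{\boldsymbol{a}}^{(d)}))\cong\Mult(\mathcal{H}(K_{\boldsymbol{a}'}^{(d)}))$ algebraically if and only if $c\le a_n'/a_n\le C$ for all $n$ --- the coefficient sequences themselves, in their given order, with no bijection $\pi$. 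The restriction to $\mathcal{A}_0$ is not optional bookkeeping: the clean characterization of algebraic isomorphism by growth is only available there, and it is what makes Step 1 rigorous (a reduction defined on a subclass suffices for a lower bound).

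Second, and more seriously, the groupoid you propose in Step 2 --- morphisms given by arbitrary bounded-ratio witnesses, with orbit equivalence relation \emph{exactly} the $\ell^\infty$-type relation --- is not a Polish groupoid: the space of such witnesses is essentially $\ell^\infty$, which is non-separable, so condition (1) of the definition fails, and no turbulence theorem applies. The paper's resolution, which you have not identified, is the standard two-relation trick: introduce a strictly \emph{finer} relation $E$ given by the separable Polish group $\Gamma=\{\boldsymbol{g}:\sum_n|\prod_{k\le n}g_k-1|<\infty\}$ (isometric to a $G_\delta$ subset of $\ell^1$) acting \emph{partially} on $(0,1)^{\mathbb{N}}$ --- the partiality of the action is precisely why one needs a groupoid rather than a group action. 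One then proves that every object of this groupoid is turbulent with dense orbit, so $E$ is generically $S_\infty$-ergodic by the groupoid turbulence theorem, and \emph{separately} proves that the coarser target relation has meager classes; combining the two gives non-classifiability of the coarser relation. Your worry about "simultaneously" verifying dense local orbits and meager global orbits for a single relation is thus dissolved rather than solved: density of local orbits is checked for $E$ (where finitely supported elements of $\Gamma$ are dense, exactly as in the classical $\ell^1\curvearrowright\mathbb{R}^{\mathbb{N}}$ example), while meagerness is checked for the coarser relation. Without this decoupling, Step 2 as you describe it would not go through.
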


To prove this result, we will consider a special class of unitarily
invariant complete NP kernels. We say that a (necessarily nonincreasing)
sequence $\boldsymbol{a}=\left( a_{n}\right) $ in $(0,1]^{\mathbb{N}}$ is 
\emph{admissible log-convex} if $a_{0}=1$, $(\frac{a_{n}}{a_{n+1}})_{n}$ is
nonincreasing and converges to $1$, and $\sum_{n=0}^{\infty }a_{n}=\infty $.
Let $\mathcal{A}_{0}\subset (0,1]^{\mathbb{N}}$ be the Borel set of
admissible log-convex sequences. Log-convexity of $\boldsymbol{a}$ implies
that there exists a sequence $(b_{n})$ of non-negative numbers as in
Equation \eqref{eqn:NP}; see \cite[Lemma 7.38]{agler_pick_2002}. Therefore, $%
\mathcal{A}_{0}\subset \mathcal{A}$.

We consider on $\mathcal{A}_{0}$ the relation $E_{\mathcal{A}_{0}}$ defined
by $\boldsymbol{a}E_{\mathcal{A}_{0}}\boldsymbol{a}^{\prime }$ if and only
if $\boldsymbol{a}$ and $\boldsymbol{a}^{\prime }$ have the same growth or,
using Landau's notation, $\boldsymbol{a}=\Theta \left( \boldsymbol{a}%
^{\prime }\right) $. This means that there are constants $c,C>0$ such that $%
c\leq a_{n}^{\prime }/a_{n}\leq C$ for every $n\in \mathbb{N}$. The
equivalence of (ii) and (iii) in \cite[Corollary 11.7]%
{hartz_isomorphism_2015} shows that the relations $\sim _{d}$ and $E_{%
\mathcal{A}_{0}}$ coincide on $\mathcal{A}_{0}$. Therefore, it only remains
to show that the relation $E_{\mathcal{A}_{0}}$ is not classifiable by
countable structures. This will be proved in Section \ref{Section:sequences}.

We mention here that the same proof also shows that the algebras $A(K)$ for $%
K$ a unitarily invariant complete NP kernel on $\mathbb{B}_{d}$ are not
classifiable by countable structures up to algebraic isomorphism. Here $A(K)$
denotes the closure of the polynomials in $\mathrm{Mult}(\mathcal{H}(K))$,
see \cite[Section 6]{hartz_isomorphism_2015}. One can also observe that, for 
$d\in \mathbb{N}$, the collection $\mathcal{K}$ of unitarily invariant
complete NP kernels is Borel. It follows from Theorem \ref%
{Theorem:nonclassification} that the relation on $\mathcal{K}$ defined by $%
K\sim K^{\prime }$ if and only if $\mathrm{Mult}(\mathcal{H}(K))$ and $%
\mathrm{Mult}(\mathcal{H}(K'))$ are algebraically isomorphic is not
classifiable by countable structures.

\section{Turbulence for Polish groupoids\label{Section:turbulence}}

The main goal of this section is to introduce the notion of \emph{turbulence}
for Polish groupoids, and to generalize to this setting Hjorth's turbulence
theorem. A \emph{groupoid} can be seen a tuple $(G,G^{0},s,r,\cdot ,i,\left(
\quad \right) ^{-1})$ where $G^{0}$ and $G$ are sets, $s,r$ are functions $%
G\rightarrow G^{0}$, $i$ is a function $G^{0}\rightarrow G$, $\cdot $ is a
function from the set $G^{2}:=\left\{ \left( \gamma ,\rho \right) \in
G\times G:s(\gamma )=r(\rho )\right\} $ to $G$, and $\gamma \mapsto \gamma
^{-1}$ is a function from $G$ to $G$. These functions are assumed to satisfy
the following relations:

\begin{itemize}
\item $s\left( i(x)\right) =r\left( i(x)\right) =x$ for every $x\in G^{0}$;

\item $\gamma \cdot i(s(\gamma ))=\gamma =i\left( r(\gamma )\right) \cdot
\gamma $ for every $\gamma \in G$;

\item $s\left( \gamma \cdot \rho \right) =s(\rho )$ and $r\left( \gamma
\cdot \rho \right) =r(\gamma )$ for every $\gamma ,\rho \in G$;

\item $\left( \gamma \cdot \rho \right) \cdot \tau =\gamma \cdot \left( \rho
\cdot \tau \right) $ for every $\gamma ,\rho ,\tau \in G$ such that $\left(
\gamma ,\rho \right) \in G^{2}$ and $\left( \rho ,\tau \right) \in G^{2}$;

\item $\gamma ^{-1}\cdot \gamma =s(\gamma )$ and $\gamma \cdot \gamma
^{-1}=r(\gamma )$ for every $\gamma \in G$.
\end{itemize}

The elements of $G^{0}$ are called the \emph{objects }of the groupoid, while
the elements of $G$ are called the \emph{arrows}. Given an arrow $\gamma $
the objects $s(\gamma )$ and $r(\gamma )$ are called the source and range of 
$\gamma $, respectively, while $\gamma ^{-1}$ is called the inverse arrow of 
$\gamma $. The arrow $i(x)$ associated with $x$ is called the \emph{identity
arrow }of $X$. The maps $s,r:G\rightarrow G^{0}$ are called source and range
maps, respectively, while the partially defined binary operation $\cdot $ is
called composition of arrows. As it is customary, in the following we
identify every object with the corresponding identity arrow, and we denote
the composition of arrows $\gamma \cdot \rho $ simply by $\gamma \rho $. A
groupoid can be equivalently defined as a small category where every
morphism is an isomorphism. In this case, the arrows of the groupoid are
just the morphisms of the category.

A\emph{\ Polish groupoid} is a groupoid endowed with a topology that

\begin{enumerate}
\item has a countable basis of Polish open sets,

\item makes composition and inversion of arrows continuous and open,

\item makes for every $x\in G^{0}$ the set $Gx$ of arrows of $G$ with source 
$x$ a Polish subspace of $G$, and

\item makes the set of objects $G^{0}$ a Polish subspace of $G$.
\end{enumerate}

Polish groupoids have been introduced and studied in \cite%
{ramsay_mackey-glimm_1990,ramsay_polish_2000}. In \cite{lupini_polish_2014}
several fundamental results about Polish group actions are generalized to
Polish groupoids. We assume in the following that $G$ is a Polish groupoid.
The \emph{orbit equivalence relation }of $G$ is the equivalence relation $%
E_{G}$ on $G^{0}$ defined by $xE_{G}y$ if and only if there exists $\gamma
\in G$ such that $s(\gamma )=x$ and $r(\gamma )=y$. If $A,B\subset G$ we let 
$AB$ be the set of all compositions $\gamma \rho $ for $\gamma \in A$ and $%
\rho \in B$ such that $r(\rho )=s(\gamma )$. We write $A\gamma $ for $%
A\left\{ \gamma \right\} $ when $A\subset G$ and $\gamma \in G$. In
particular if $x\in G^{0}$ then $Ax$ is the set of elements of $A$ with
source $x$. If $X$ is a $G_{\delta }$ subset of $G^{0}$, denote by $G|_{X}$
the Polish groupoid $XGX=\left\{ \gamma \in G:s(\gamma ),r(\gamma )\in
X\right\} $ endowed with the subspace topology. This is called the \emph{%
restriction }of $G$ to $X$. If $x$ is an object of $G$ and $V$ is a
neighborhood of $x$ in $G$, then the \emph{local orbit }$\mathcal{O}\left(
x,V\right) $ is the set of all points that can be reached from $x$ by
applying elements of $V$. In formulas%
\begin{equation*}
\mathcal{O}\left( x,V\right) =\bigcup_{n\in \mathbb{N}}r\left[ V^{n}x\right] 
\text{.}
\end{equation*}

\begin{definition}
\label{Definition:turbulence}An object $x$ of $G$ is \emph{turbulent }if for
every neighborhood $V$ of $x$ the local orbit $\mathcal{O}\left( x,V\right) $
is somewhere dense. The groupoid $G$ is \emph{generically preturbulent} if
the set of turbulent objects with dense orbit is a comeager subset of $G^{0}$%
. If moreover every orbit is meager, then $G$ is \emph{generically turbulent}%
.
\end{definition}

In the rest of this section we will often tacitly use the following version
of the classical Kuratowski--Ulam theorem; see \cite[Lemma 2.9.1]%
{lupini_polish_2014}.

\begin{fact}
\label{Fact:KU}Suppose that $X$ is a second countable topological space, $Y$
is a Polish space, and $f:X\rightarrow Y$ is open and continuous. If $%
A\subset X$ is analytic, then $A$ is comeager if and only if $f^{-1}\left\{
y\right\} \cap A$ is comeager in $f^{-1}\left\{ y\right\} $ for comeager
many $y\in Y$.
\end{fact}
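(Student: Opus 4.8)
The final statement to prove is Fact~\ref{Fact:KU}, a version of the Kuratowski--Ulam theorem for open continuous maps; let me sketch a proof plan.

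\medskip

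The plan is to reduce the statement to the classical Kuratowski--Ulam theorem for product spaces by exploiting openness of $f$. First I would fix a countable basis $\{U_n\}$ of $X$ consisting of nonempty open sets, which exists since $X$ is second countable. For each $n$, the set $f[U_n]$ is open in $Y$ by the openness of $f$, and I would first treat the case where $A$ is open: in that case, I claim $A$ is comeager in $X$ if and only if the fiberwise intersection $f^{-1}\{y\}\cap A$ is comeager in $f^{-1}\{y\}$ for comeager many $y$. The ``only if'' direction uses that if $A$ is open dense in $X$, then for every basic open $U_n$ the set $f[U_n\cap A]$ is open, nonempty, and contained in $f[U_n]$; a Baire-category argument on $Y$ then shows that for comeager many $y$, whenever $y\in f[U_n]$ we have $f^{-1}\{y\}\cap U_n\cap A\ne\emptyset$, which gives density of $A$ in the fiber. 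The ``if'' direction is easier: if $A$ is open and fails to be dense in $X$, then $A$ misses some basic open $U_n$, and then $f^{-1}\{y\}\cap A$ misses $f^{-1}\{y\}\cap U_n$, which is nonempty and open in the fiber for all $y\in f[U_n]$, a nonmeager set.

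\medskip

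Next I would bootstrap from open sets to comeager sets. If $A$ is comeager, write $A\supseteq\bigcap_m D_m$ with each $D_m$ open dense; by the previous paragraph, for each $m$ there is a comeager $C_m\subseteq Y$ such that $D_m$ is dense in $f^{-1}\{y\}$ for $y\in C_m$. Intersecting, $\bigcap_m D_m$ — hence $A$ — is comeager in $f^{-1}\{y\}$ for all $y$ in the comeager set $\bigcap_m C_m$, using that each fiber $f^{-1}\{y\}$ is itself a Polish (in particular Baire) space by a hypothesis of the Fact (or of the ambient groupoid setting). Conversely, if $A$ is comeager in comeager-many fibers, I would write the complement $X\setminus A$ as contained in a set which is meager in those fibers, and again decompose into fiberwise-nowhere-dense pieces; for each such piece, applying the open-set case to the complement of its closure shows the closure of the piece is nowhere dense in $X$, so $X\setminus A$ is meager in $X$. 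The only subtlety here is that $A$ is merely analytic, not Borel; I would handle this by invoking the Baire property of analytic sets (every analytic subset of a Polish or second-countable space has the Baire property), so that $A$ differs from an open set by a meager set, reducing everything to the open case.

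\medskip

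The step I expect to be the main obstacle is the ``only if'' direction for open dense $A$ — transferring density in $X$ to density in comeager-many fibers — because this is where the openness of $f$ must be used essentially and where one must control, uniformly over the countably many basic sets $U_n$, the set of ``bad'' $y$ where $f^{-1}\{y\}\cap U_n\cap A=\emptyset$ despite $y\in f[U_n]$. The key observation is that this bad set is $f[U_n]\setminus f[U_n\cap A]$, which is the difference of two open (hence Borel) sets, and I would need to argue it is meager; this follows because $U_n\cap A$ is open dense in $U_n$, so a further Baire-category argument inside $f[U_n]$ (or a direct appeal to the classical Kuratowski--Ulam theorem applied to the graph of $f$ restricted over $f[U_n]$, viewed locally as a product up to homeomorphism is \emph{not} available, so the argument must be done by hand) shows $f[U_n\cap A]$ is comeager in $f[U_n]$. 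Taking the union over $n$ of these countably many meager bad sets gives the desired comeager set of good $y$.
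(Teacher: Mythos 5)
The paper does not actually prove Fact~\ref{Fact:KU}; it quotes it from \cite[Lemma 2.9.1]{lupini_polish_2014}, so there is no in-paper argument to compare against, and your proposal has to stand on its own. Your overall strategy---prove the open case by pushing the basic open sets $U_n$ forward along the open map $f$, pass to comeager sets by countable intersection, and reduce analytic sets to open ones via the Baire property---is the standard way such fiberwise Kuratowski--Ulam statements are proved, and your forward direction is essentially correct. One simplification: for $A$ open and dense, the bad set $f[U_n]\setminus f[U_n\cap A]$ is the complement, inside the open set $f[U_n]$, of an open subset that is \emph{dense} in $f[U_n]$ (a one-line argument: if $f(x)=y$ with $x\in U_n$ and $W\ni y$ is open, then $f^{-1}[W]\cap U_n$ is nonempty open and hence meets $A$), so it is nowhere dense outright; no further Baire-category argument is needed there.

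The genuine gap is in your converse direction. You propose to ``decompose $X\setminus A$ into fiberwise-nowhere-dense pieces'' and handle each piece by the open case. But the hypothesis only provides, for each good $y$ separately, a decomposition of $(X\setminus A)\cap f^{-1}\{y\}$ into countably many sets nowhere dense in that fiber, and this decomposition depends on $y$; there is no single countable decomposition of $X\setminus A$ that is simultaneously fiberwise nowhere dense over comeager many $y$, so this step cannot be carried out as written. The repair is exactly the reduction you mention in passing: by the Baire property write $A=U\triangle P$ with $U$ open and $P$ meager; apply the already-proved forward direction to conclude that $P$ is fiberwise meager for comeager many $y$; deduce that $U$ is fiberwise comeager, hence fiberwise dense, for comeager many $y$; and then your ``if'' direction of the open case yields that $U$ is dense, hence comeager, hence so is $A$. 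Note finally that this last step, as well as the implication ``open and comeager implies dense,'' requires $X$ and the fibers $f^{-1}\{y\}$ to be Baire spaces. This is not literally among the hypotheses of the Fact as stated (only second countability of $X$ is assumed), contrary to your parenthetical claim; it does, however, hold in the only situation where the Fact is invoked, namely for restrictions of the source and range maps of a Polish groupoid, whose total space has a basis of Polish open sets and whose fibers $Gx$ are Polish by definition.
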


For example, it follows from Fact \ref{Fact:KU} that if $X$ is a dense $%
G_{\delta }$ subspace of $G^{0}$ and $G$ is generically (pre)turbulent, then 
$G|_{X}$ is generically (pre)turbulent.

Suppose that $H$ is a Polish group and $Y$ is a Polish $H$-space, i.e.\ a
Polish space endowed with a continuous action of $H$. Let $G$ be the Polish
action groupoid associated with the Polish $H$-space $Y$ as in \cite[%
Subsection 2.7]{lupini_polish_2014}. Observe that the orbit equivalence
relation $E_{G}$ coincides with the orbit equivalence relation $E_{H}^{Y}$.
Furthermore it is not difficult to verify that $G$ is a generically
(pre)turbulent groupoid as in Definition \ref{Definition:turbulence} if and
only if $Y$ is a generically (pre)turbulent $H$-space in the sense of \cite[%
Definition 10.3.3]{gao_invariant_2009}.

Recall the following terminology from Borel complexity of equivalence
relations. If $E$ and $F$ are equivalence relations on standard Borel spaces 
$X$ and $Y$, then an $\left( E,F\right) $-homomorphism is a function $%
f:X\rightarrow Y$ that maps $E$-classes into $F$-classes. A generic $\left(
E,F\right) $-homomorphism is a function $f:X\rightarrow Y$ that is an $%
\left( E,F\right) $-homomorphism when restricted to some comeager subset of $%
X$. An equivalence relation $E$ on a standard Borel space $X$ is \emph{%
generically }$S_{\infty }$\emph{-ergodic} if for every Polish $S_{\infty }$%
-space $Y$ and every Baire-measurable generic $(E,E_{S_{\infty }}^{Y})$%
-homomorphism, there exists a comeager subset of $X$ that is mapped by $f$
into a single $S_{\infty }$-orbit. It is well known that an equivalence
relation is classifiable by countable structures if and only if it is Borel
reducible to the orbit equivalence relation of a Polish $S_{\infty }$-space;
see \cite[Theorem 11.3.8]{gao_invariant_2009}.

The following is the main consequence of turbulence for Polish groupoids.

\begin{theorem}
\label{Theorem:turbulence-groupoids}Suppose that $G$ is a generically
preturbulent Polish groupoid. Then the associated orbit equivalence relation 
$E_{G}$ is generically $S_{\infty }$-ergodic.
\end{theorem}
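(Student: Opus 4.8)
The plan is to adapt Hjorth's proof of the turbulence theorem for Polish group actions (see \cite[Chapter 13]{gao_invariant_2009}) to the groupoid setting, working directly with the Polish groupoid $G$ rather than passing through an ambient group. Fix a Polish $S_\infty$-space $Y$, a Baire-measurable generic $(E_G, E_{S_\infty}^Y)$-homomorphism $f \colon G^0 \to Y$, and let $X_0 \subset G^0$ be the comeager set on which $f$ is a genuine homomorphism and is Baire-measurable; since $G$ is generically preturbulent, shrinking $X_0$ further we may assume that every $x \in X_0$ is turbulent with dense orbit, and by Fact \ref{Fact:KU} the restriction $G|_{X_0}$ is again generically preturbulent. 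The goal is to produce a single $S_\infty$-orbit $O$ such that $f^{-1}(O)$ is comeager in $G^0$.

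First I would recall the structure of basic open sets in a Polish $S_\infty$-space: up to Borel isomorphism we may take $Y$ to be a closed invariant subspace of the space of countable structures in a fixed countable relational language, where the topology is generated by the sets specifying finitely many atomic facts, and the $S_\infty$-action is by permuting the underlying set $\mathbb{N}$. The key local invariant is: for a finite partial injection $p$ of $\mathbb{N}$ and an open set $U \subset Y$, whether $p \cdot U$ (the translate) meets a given open set. The heart of the argument is a \emph{local orbit continuity} lemma: if $x \in X_0$, $V$ is a symmetric open neighborhood of $x$ in $G$ with $s(V) = r(V) = X_0$, and $W$ is an open neighborhood of $f(x)$ in $Y$, then for comeager many $y$ in the local orbit $\mathcal{O}(x,V)$ one has $f(y) \in S_\infty \cdot W$; more precisely, one shows by an inductive ``zig-zag'' construction over the open cover given by condition (1) in the definition of Polish groupoid that $f(y)$ and $f(x)$ can be joined by finitely many permutations each moving things only within a controlled finite region. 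This is where one uses that $f$ is a homomorphism on $X_0$ together with openness and continuity of composition in $G$ and Fact \ref{Fact:KU} applied to the source and range maps to push ``comeager'' statements between fibers $Vx$ and $\mathcal{O}(x,V)$.

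Next I would run the standard fusion argument. Using that the turbulent objects with dense orbit are comeager, and using local orbit continuity together with the Kuratowski--Ulam theorem in the form of Fact \ref{Fact:KU}, one shows that the map sending $x$ to the ``germ at $f(x)$ of the $S_\infty$-orbit closure'' is generically constant: for every basic open $W \subset Y$ the set $\{x \in X_0 : S_\infty \cdot f(x) \cap W \neq \emptyset\}$ is, by turbulence, either meager or comeager, because membership is invariant under replacing $x$ by a point in a local orbit $\mathcal{O}(x,V)$ which by preturbulence is somewhere dense. A Baire-category diagonalization over a countable basis of $Y$ then isolates a single point $y_* \in Y$ (equivalently a single $S_\infty$-orbit $O = S_\infty \cdot y_*$, using that $S_\infty$-orbits in Polish $S_\infty$-spaces are exactly the sets determined by realizing a fixed countable structure up to isomorphism) such that $f^{-1}(O)$ is non-meager; since $f^{-1}(O)$ is $E_G$-invariant on $X_0$ and every orbit of $G|_{X_0}$ is dense, $f^{-1}(O)$ is in fact comeager in $G^0$, which is the conclusion. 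Finally I would note that $E_{G}$ being generically $S_\infty$-ergodic is exactly the statement that no such $f$ can, on a comeager set, separate $E_G$-classes into more than one countable structure, hence (combined with genericity of orbits) rules out $E_G \leq_B E_{S_\infty}^Y$ on comeager domains and yields the non-classifiability consequences used later.

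The main obstacle I expect is the local orbit continuity lemma: in the group case one exploits that a single group element acts globally and one can compose translations freely, whereas in a groupoid an arrow $\gamma \in V$ only connects $s(\gamma)$ to $r(\gamma)$, so the ``path'' witnessing membership of $y$ in $\mathcal{O}(x,V)$ is a chain of composable arrows and one must carefully propagate both the homomorphism property of $f$ and the genericity (comeagerness) of the relevant sets along this chain, using openness of composition and Fact \ref{Fact:KU} at each stage; getting the quantifiers right so that ``comeager many choices at step $n$'' assembles into ``comeager many $y \in \mathcal{O}(x,V)$'' is the technically delicate point. The second, milder difficulty is bookkeeping the $S_\infty$ combinatorics (finite partial injections and the regions they control) uniformly in the groupoid picture, but this is essentially identical to the group case once the first difficulty is resolved.
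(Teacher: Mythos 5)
Your overall architecture matches the paper's only up to the halfway point. The paper also begins with a ``local continuity at the identity'' lemma (its Lemma \ref{Lemma:neighborhood-origin}): on a comeager set $C$, for every neighborhood $W$ of $1_H$ in the acting group there is a neighborhood $V$ of $x$ in $G$ such that for all $x'\in s[V]\cap C$ and comeager many $\gamma\in Vx'$ one has $f(r(\gamma))\in Wf(x')$. Your ``local orbit continuity lemma'' aims at the same target but is strictly weaker as stated: you conclude only $f(y)\in S_\infty\cdot W$, i.e.\ control up to an \emph{arbitrary} group element, whereas the essential point is control up to an element of a \emph{small} neighborhood of the identity. (Also, ``comeager many $y\in\mathcal{O}(x,V)$'' is not well-posed, since the local orbit is merely a countable union of images $r[V^nx]$; the paper quantifies over comeager many arrows $\gamma\in Vx'$ instead, which is what makes Fact \ref{Fact:KU} applicable.)

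The genuine gap is in your second step. The $0$--$1$ law you invoke --- for each basic open $W\subset Y$ the invariant set $\{x: S_\infty\cdot f(x)\cap W\neq\emptyset\}$ is meager or comeager --- combined with diagonalization over a countable basis shows only that the \emph{closure} of the orbit $S_\infty\cdot f(x)$ is generically constant. It does not isolate a single orbit: distinct $S_\infty$-orbits can share the same closure (two distinct dense orbits, for instance), and an orbit is not determined by the collection of basic open sets it meets. As written, your argument would apply verbatim to any Polish group action admitting a generically dense orbit and would ``prove'' generic ergodicity far too generally; it uses neither turbulence beyond density of orbits nor anything specific to $S_\infty$. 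The real content of Hjorth's theorem, and of the paper's proof, is exactly the step you elide: one exploits that $S_\infty$ has a neighborhood basis at the identity consisting of open subgroups $N_k$, and runs a back-and-forth construction producing points $x_i$, $y_i$ and permutations $g_i,h_i$ with $g_if(x)=f(x_i)$, $h_if(y)=f(y_i)$, and $d(g_i,g_{i+1})\le 2^{-i}$, $d(h_i,h_{i+1})\le 2^{-i}$ in the complete metric $D(\sigma,\rho)=d(\sigma,\rho)+d(\sigma^{-1},\rho^{-1})$; the limits $g,h$ then satisfy $gf(x)=hf(y)$. Turbulence (local orbits somewhere dense) and density of orbits are used precisely here, to zig-zag between neighborhoods of $x_i$ and $y_i$ while keeping the correcting permutations inside the prescribed subgroups $N_{k}$ so that the sequences are Cauchy. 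Without this fusion argument the proof does not close.
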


\begin{corollary}
\label{Corollary:turbulence-groupoids}If $G$ is a generically turbulent
Polish groupoid, then the orbit equivalence relation $E_{G}$ is not
classifiable by countable structures.
\end{corollary}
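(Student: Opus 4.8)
The plan is to derive this corollary directly from Theorem~\ref{Theorem:turbulence-groupoids} together with two standard facts from Hjorth's theory. First I would invoke the theorem: since $G$ is generically turbulent, it is in particular generically preturbulent, so $E_G$ is generically $S_\infty$-ergodic. The remaining task is to upgrade "generically $S_\infty$-ergodic" plus "every orbit meager" to "not classifiable by countable structures". The key observation is that classifiability by countable structures means exactly that $E_G$ is Borel reducible to the orbit equivalence relation $E_{S_\infty}^Y$ of some Polish $S_\infty$-space $Y$, by \cite[Theorem 11.3.8]{gao_invariant_2009} as recalled in the excerpt.

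So suppose toward a contradiction that $f\colon G^0\to Y$ is a Borel reduction of $E_G$ to $E_{S_\infty}^Y$ for some Polish $S_\infty$-space $Y$. A Borel function is in particular Baire-measurable, and being a genuine reduction it is certainly an $(E_G,E_{S_\infty}^Y)$-homomorphism, hence a generic one. By generic $S_\infty$-ergodicity of $E_G$ there is a comeager set $C\subseteq G^0$ with $f(C)$ contained in a single $S_\infty$-orbit $\mathcal{O}\subseteq Y$. Now I would extract the contradiction from the hypothesis that every $E_G$-orbit is meager: since $f$ is a reduction, $f^{-1}(\mathcal{O})$ is a single $E_G$-class, hence meager in $G^0$; but $f^{-1}(\mathcal{O})\supseteq C$ is comeager, contradicting the Baire category theorem (a comeager subset of the Polish space $G^0$ cannot be meager, as $G^0$ is nonempty).

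One point that needs a little care is that $G^0$ is a Polish space (this is built into the definition of Polish groupoid), so "comeager" and "meager" behave as expected and are not simultaneously possible on a nonempty Polish space. Another point worth noting: the notion of generic $S_\infty$-ergodicity is stated in the excerpt for equivalence relations on standard Borel spaces, and $G^0$ is indeed standard Borel; moreover the comeager set witnessing classifiability can be taken inside $G^0$ itself since a Borel reduction restricted to a comeager set is still a Baire-measurable generic homomorphism. The main (and really only) obstacle is making sure the bookkeeping between "generic homomorphism" and "honest Borel reduction" is airtight, but since every Borel function is Baire-measurable and a reduction is a fortiori a homomorphism, this is immediate. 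Thus no Polish $S_\infty$-space $Y$ can absorb $E_G$ via a Borel reduction, which is precisely the statement that $E_G$ is not classifiable by countable structures.
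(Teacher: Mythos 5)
Your argument is correct and is exactly the standard derivation the paper leaves implicit: apply Theorem \ref{Theorem:turbulence-groupoids} to get generic $S_\infty$-ergodicity, then note that a Borel reduction would force a comeager set into a single (necessarily meager, by generic turbulence) $E_G$-class, contradicting the Baire category theorem on the Polish space $G^0$. No gaps; the bookkeeping between Borel reductions and Baire-measurable generic homomorphisms is handled properly.
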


Theorem \ref{Theorem:turbulence-groupoids} generalizes the original result
of Hjorth \cite[Section 3]{hjorth_classification_2000} from Polish group
actions to Polish groupoids. Polish groupoids provide a natural setting to
present the proof of Hjorth's turbulence theorem even in the case of Polish
group actions. Indeed in the course of the proof one looks at the action
\textquotedblleft restricted\textquotedblright\ to a (not necessarily
invariant) $G_{\delta }$ subspace; see for example \cite[Theorem 10.4.2]%
{gao_invariant_2009}.\ Such a restriction is not a Polish group action in
general, even when one starts with a Polish group action. It is nonetheless
a Polish groupoid.

The following lemma is the groupoid analog of \cite[Lemma 3.17]%
{hjorth_classification_2000}. In the following, if $V$ is an open subset of $%
G$, we write $\forall ^{\ast }\gamma \in V$ to mean \textquotedblleft for a
comeager set of $\gamma \in V$\textquotedblright .

\begin{lemma}
\label{Lemma:neighborhood-origin}Suppose that $G$ is a Polish groupoid, $H$
is a Polish group, and $Y$ is a Polish $H$-space. If $f:G^{0}\rightarrow Y$
is a Baire-measurable generic $\left( E_{G},E_{H}^{Y}\right) $-homomorphism,
then there exists a comeager subset $C$ of $G^{0}$ such that for every $x\in
C$ and every open neighborhood $W$ of $1_{H}$ in $H$ there exists a
neighborhood $V$ of $x$ such that for every $x^{\prime }\in s[V]\cap C$ and
for a comeager set of $\gamma \in Vx^{\prime }$, 
\begin{equation*}
f(r(\gamma ))\in Wf(x^{\prime })\text{.}
\end{equation*}
\end{lemma}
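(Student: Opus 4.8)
The plan is to mimic the proof of the analogous statement for Polish group actions (Hjorth's Lemma 3.17, as presented in \cite[Lemma 10.4.2 and its proof]{gao_invariant_2009}), carefully tracking the modifications needed because the orbit of $x$ is now the range $r[Gx]$ of the arrows with source $x$ rather than the orbit of a single group. First I would fix a countable basis $(W_k)$ of open neighborhoods of $1_H$ in $H$; by continuity of the action and the standard fact that in a Polish group every neighborhood of the identity contains $V V^{-1}$ for a smaller symmetric $V$, it suffices to prove the conclusion for each $W_k$ separately and then intersect the resulting comeager sets over $k$. So fix an open symmetric neighborhood $W$ of $1_H$.

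The core of the argument is a \emph{local} statement obtained from the hypothesis that $f$ is a generic $(E_G, E_H^Y)$-homomorphism. Let $D\subset G^0$ be a comeager set on which $f$ is Baire-measurable and on which $f$ genuinely sends $E_G$-classes into $E_H^Y$-classes; shrinking, I may assume $f$ is continuous on a dense $G_\delta$ set $D$ (Baire property plus the usual restriction-to-$G_\delta$ trick). Now consider the sets
\begin{equation*}
 U_V = \{\, x' \in s[V]\cap D : \forall^* \gamma \in Vx',\ f(r(\gamma)) \in W f(x') \,\},
\end{equation*}
as $V$ ranges over a countable basis of open sets in $G$. The key claim is that $\bigcup_V U_V$ is comeager in $G^0$. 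To see this, for a fixed $x'\in D$ I apply Fact~\ref{Fact:KU} to the map $r: Gx' \to G^0$ (which is open and continuous by the groupoid axioms): since $f$ restricted to the $E_G$-class of $x'$ lands in the single orbit $H f(x')$, and $H f(x')$ is covered by the countably many translates $W f(x')$ composed with a countable dense subgroup-like family — more precisely, using that the action map $H\times Y\to Y$ is continuous and $W$ is open — the preimage under $f\circ r$ of $W f(x')$ is comeager in $Gx'$ for comeagerly many $x'$. Then a second application of Fact~\ref{Fact:KU}, this time to $s$ and to an appropriate open $V$, lets me "transfer" this comeagerness in the fiber $Gx'$ down to comeagerness of the set of good $x'$ in a neighborhood, and finally to comeagerness of $\bigcup_V U_V$ in $G^0$.

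Given the claim, I would then perform the standard recursion over the countable family $(W_k)$: set $C = D \cap \bigcap_k (\text{the comeager set produced for } W_k)$, which is comeager. For $x\in C$ and a given neighborhood $W$ of $1_H$, pick $k$ with $W_k\subset W$; since $x$ lies in the comeager set for $W_k$, there is a basic open $V\ni x$ with $x\in U_V$ witnessed for $W_k$; but membership $x'\in U_V$ (with the same $V$) holds for \emph{every} $x'\in s[V]\cap C$ by construction of $U_V$, and then $f(r(\gamma))\in W_k f(x') \subset W f(x')$ for a comeager set of $\gamma\in Vx'$. That is exactly the desired conclusion.

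The step I expect to be the main obstacle is the double application of Kuratowski--Ulam (Fact~\ref{Fact:KU}) to the two fibrations $s,r: G\to G^0$: one must check that the relevant sets are analytic (so that Fact~\ref{Fact:KU} applies), that the quantifier "$\forall^* \gamma \in Vx'$" interacts correctly with passing between the fiber $Gx'$ and the total space — here the openness and continuity of composition are used crucially to know that $Vx'$ is a Polish subspace and that the maps restrict well — and that the set of "good" base points is genuinely comeager rather than merely nonmeager. A secondary technical point is the reduction to continuous $f$ on a $G_\delta$ and the bookkeeping ensuring that restricting $G$ to that $G_\delta$ (forming $G|_D$, still a Polish groupoid) does not destroy the homomorphism hypothesis; this is precisely the kind of move that motivates working with groupoids rather than group actions in the first place, as remarked after Theorem~\ref{Theorem:turbulence-groupoids}.
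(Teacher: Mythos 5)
There is a genuine gap at the heart of your argument. You assert that, because $f$ maps the $E_G$-class of $x'$ into the single orbit $Hf(x')$, ``the preimage under $f\circ r$ of $Wf(x')$ is comeager in $Gx'$ for comeagerly many $x'$.'' This does not follow. What the homomorphism hypothesis gives you is that $Gx'$ is covered by the countably many analytic sets $\left\{\gamma : f(r(\gamma))\in W_0 h_n f(s(\gamma))\right\}$, where $\bigcup_n W_0h_n=H$; by Baire category \emph{some} translate $W_0h_nf(x')$ has non-meager preimage, but there is no reason this happens for the translate $Wf(x')$ itself (i.e.\ for $h_n=e$), which is what your set $U_V$ demands. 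The paper's proof exists precisely to get around this: it approximates each analytic set $A_n$ by an open set $O_n$ modulo meager, sets $D_n=A_n\cap O_n$, and then uses the composition $\rho\gamma^{-1}$ of two arrows $\rho,\gamma\in D_n$ with the same source. Since $f(r(\rho))\in W_0h_nf(s(\rho))$ and $f(r(\gamma))\in W_0h_nf(s(\gamma))$ with $s(\rho)=s(\gamma)$, the unknown translate $h_n$ \emph{cancels} and $f(r(\rho\gamma^{-1}))\in W_0W_0^{-1}f(s(\rho\gamma^{-1}))\subset Wf(s(\rho\gamma^{-1}))$. The neighborhood $V$ of $x$ is then taken to be $\widetilde O_n\widetilde O_n^{-1}$, with the comeager set of good arrows being $\widetilde D_n\widetilde D_n^{-1}x'$. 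Your proposal contains no analogue of this cancellation step, and without it the conclusion about $Wf(x')$ (rather than some unknown translate) cannot be reached.

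A secondary problem is the uniformity over $x'$. You define $U_V$ as the set of $x'$ for which the desired condition holds and then claim that ``membership $x'\in U_V$ holds for every $x'\in s[V]\cap C$ by construction of $U_V$''; but comeagerness of $\bigcup_V U_V$ only tells you that a generic $x'$ lies in $U_V$ for \emph{some} $V$, not that all $x'\in s[V]\cap C$ lie in $U_V$ for the \emph{particular} $V$ chosen for $x$. The paper secures this uniformity by refining the $O_n$ to sets $\widetilde O_n$ with pairwise disjoint ranges and taking $C$ locally equal to a comeager subset $C_n$ of $r[\widetilde O_n]=s[\widetilde O_n\widetilde O_n^{-1}]$, so that near a given $x$ a single index $n$ (hence a single $V=\widetilde O_n\widetilde O_n^{-1}$) governs all relevant $x'$. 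Your instinct about where the difficulty lies (the double use of Fact~\ref{Fact:KU} on the fibrations $s$ and $r$) is reasonable, but the Kuratowski--Ulam bookkeeping is the routine part; the missing ingredients are the covering of $H$ by translates $W_0h_n$ together with the $D_nD_n^{-1}$ cancellation, and the disjointification that yields the required uniformity.
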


\begin{proof}
After replacing $G$ with the restriction of $G$ to a dense $G_{\delta }$
subset of $G^{0}$, we can assume that $f$ is a continuous $\left(
E_{G},E_{H}^{Y}\right) $-homomorphism \cite[Exercise 2.3.2]%
{gao_invariant_2009}. Furthermore it is enough to prove that for every open
neighborhood $W$ of $1_{H}$ there is a comeager subset $C$ of $X$ such that
for every $x\in C$ there exists a neighborhood $V$ of $x$ in $G$ such that $%
\forall x^{\prime }\in s[V]\cap C$, $\forall ^{\ast }\gamma \in Vx^{\prime }$%
, $f(r(\gamma ))\in Wf(x^{\prime })$. Fix an open neighborhood $W$ of $1_{H}$
and an open neighborhood $W_{0}$ of $1_{H}$ such that $W_{0}^{-1}=W_{0}$ and 
$W_{0}^{2}\subset W$. Fix a sequence $\left( h_{n}\right) $ in $H$ such that%
\begin{equation*}
\bigcup_{n\in \mathbb{N}}W_{0}h_{n}=H\text{.}
\end{equation*}%
For every $n\in \mathbb{N}$, the set%
\begin{equation*}
B_{n}=\left\{ \left( z,y\right) \in Y\times Y\mid z\in W_{0}h_{n}y\right\}
\end{equation*}%
is analytic. Therefore the set 
\begin{equation*}
A_{n}=\left\{ \gamma \in G:f(r(\gamma ))\in W_{0}h_{n}f(s(\gamma ))\right\}
\end{equation*}%
is analytic by \cite[Proposition 22.1]{kechris_classical_1995}. By \cite[%
Proposition 8.22]{kechris_classical_1995} there exists an open subset $O_{n}$
of $G$ such that $O_{n}\bigtriangleup A_{n}$ is meager. Set $D_{n}=A_{n}\cap
O_{n}$, and observe that $D_{n}D_{n}^{-1}$ is a comeager subset of $%
O_{n}O_{n}^{-1}$. Since $G$ is the union of $A_{n}$ for $n\in \mathbb{N}$,
the union $O$ of $O_{n}$ for $n\in \mathbb{N}$ is an open dense subset of $G$%
. In particular $r\left[ O\right] $ is an open subset of $G^{0}$. Define
now, for $n\in \mathbb{N}$, $\widetilde{O}_{n}$ to be the set of $\gamma \in
O_{n}$ such that $r(\gamma )$ does not belong to the closure of the union of 
$r\left[ O_{i}\right] $ for $i<n$. Let $\widetilde{O}$ be the union of $%
\widetilde{O}_{n}$ for $n\in \mathbb{N}$, and observe that $r[\widetilde{O}]$
is an open dense subset of $G^{0}$. For every $n\in \mathbb{N}$ set $%
\widetilde{D}_{n}=D_{n}\cap \widetilde{O}_{n}$ and observe that $\widetilde{D%
}_{n}$ is a comeager subset of $\widetilde{O}_{n}$. Therefore there exists a
comeager subset $C_{n}$ of $r[\widetilde{O}_{n}]=s[\widetilde{O}_{n}%
\widetilde{O}_{n}^{-1}]$ such that for every $x\in C_{n}$, $\widetilde{D}_{n}%
\widetilde{D}_{n}^{-1}x$ is a comeager subset of $\widetilde{O}_{n}%
\widetilde{O}_{n}^{-1}x$. Define $C$ to be the union of $C_{n}$ for $n\in 
\mathbb{N}$, and observe that $C$ is a comeager subset of $G^{0}$. We claim
that $C$ satisfies the desired conclusions. Fix $x\in C$ and $n\in \mathbb{N}
$ such that $x\in C_{n}$. We have that $\widetilde{O}_{n}\widetilde{O}%
_{n}^{-1}$ is an open neighborhood of $x$. Furthermore for every $x^{\prime
}\in C_{n}=C\cap s[\widetilde{O}_{n}\widetilde{O}_{n}^{-1}]$, $\widetilde{D}%
_{n}\widetilde{D}_{n}^{-1}x^{\prime }$ is comeager in $\widetilde{O}_{n}%
\widetilde{O}_{n}^{-1}x^{\prime }$. If $\rho ,\gamma \in \widetilde{D}_{n}$,
then%
\begin{equation*}
f(r(\gamma ))\in W_{0}h_{n}f(s(\gamma ))\quad \text{and\quad }f(r(\rho ))\in
W_{0}h_{n}f(s(\rho ))\text{.}
\end{equation*}%
Therefore%
\begin{equation*}
f(r(\rho \gamma ^{-1}))=f(r(\rho ))\in W_{0}h_{n}f(s(\rho ))\subset
W_{0}W_{0}^{-1}f(s(\rho \gamma ^{-1}))\subset Wf(s(\rho \gamma ^{-1}))\text{.%
}
\end{equation*}%
This concludes the proof.
\end{proof}

We now explain how one can deduce Theorem \ref{Theorem:turbulence-groupoids}
from Lemma \ref{Lemma:neighborhood-origin}.

\begin{proof}[Proof of Theorem \protect\ref{Theorem:turbulence-groupoids}]
Fix an enumeration $\left( V_{k}\right) _{k\in \mathbb{N}}$ of a basis of
Polish open subsets of $G$, and a compatible complete metric $d_{Y}$ on $Y$
bounded by $1$. Suppose that $d$ is the metric in $S_{\infty }$ defined by 
\begin{equation*}
\log _{2}d(\sigma ,\rho )=-\min \left\{ n\in \mathbb{N}:\sigma \left(
n\right) \neq \rho \left( n\right) \right\} \text{.}
\end{equation*}%
for $\sigma ,\rho \in S_{\infty }$. We also consider the complete metric%
\begin{equation*}
D(\sigma ,\rho )=d(\sigma ,\rho )+d(\sigma ^{-1},\rho ^{-1})
\end{equation*}%
on $S_{\infty }$. Define $e$ to be the identity of $S_{\infty }$, and%
\begin{equation*}
N_{k}=\left\{ \sigma \in S_{\infty }:d(\sigma ,e)<2^{-k}\right\}
\end{equation*}%
for $k\in \mathbb{N}$. As in the proof of Hjorth's turbulence theorem for
Polish group actions \cite[Theorem 10.4.2]{gao_invariant_2009}, one can
deduce from Lemma \ref{Lemma:neighborhood-origin} that there exists a dense $%
G_{\delta }$ subset $C_{0}$ of $G^{0}$ with the following properties:

\begin{itemize}
\item $f|_{C_{0}}$ is a continuous $(E_{G},E_{S_{\infty }}^{Y})$%
-homomorphism;

\item every element of $C_{0}$ has dense orbit;

\item for every $m\in \mathbb{N}$ and $x\in V_{m}\cap C_{0}$ the local orbit 
$\mathcal{O}\left( x,V_{m}\right) $ is somewhere dense;

\item for every $x\in C_{0}$ and $k\in \mathbb{N}$ there exists $m\in 
\mathbb{N}$ such that $x\in V_{m}$ and $\forall x^{\prime }\in s\left[ V_{m}%
\right] $, $\forall ^{\ast }\gamma \in V_{m}x^{\prime }$, $f(r(\gamma ))\in
N_{k}f(x^{\prime })$.
\end{itemize}

Let $C$ be the set of $x\in C_{0}$ such that $\forall ^{\ast }\gamma \in Gx$%
, $r(\gamma )\in C_{0}$, and observe that $C$ is a dense $G_{\delta }$
subset of $G^{0}$ \cite[Lemma 2.10.6]{lupini_polish_2014}. After replacing $%
G $ with the restriction $G|_{C}$ of $G$ to $C$, and $V_{k}$ with $V_{k}\cap
G|_{C}$, we can assume that $C=G^{0}$.

Fix $x_{0},y_{0}\in G^{0}$. We claim that $f(x)E_{S_{\infty }}^{Y}f(y)$. We
will define by recursion on $i\geq 0$ elements $x_{i},y_{i}$ of $G^{0}$, $%
g_{i},h_{i}$ of $S_{\infty }$, and $n_{x}(i),n_{y}(i)$ of $\mathbb{N}$, such
that the following conditions hold:

\begin{itemize}
\item $g_{0}=h_{0}=e$;

\item $x_{0}\in V_{n_{x}\left( 0\right) }$ and $y_{0}\in V_{n_{y}\left(
0\right) }$;

\item $g_{i}f(x)=f(x_{i})$ and $h_{i}f(y)=f(y_{i})$;

\item $x_{i+1}\in V_{n_{x}(i)}\cap \mathcal{O}\left(
x_{i},V_{n_{x}(i)}\right) $ and $y_{i+1}\in V_{n_{y}(i)}\cap \mathcal{O}%
\left( y_{i},V_{n_{y}(i)}\right) $;

\item the $d_{Y}$-diameter of $f\left[ G^{0}\cap V_{n_{x}(i)}\right] $ is at
most $2^{-i}$;

\item $\mathcal{O}\left( x_{i},V_{n_{x}(i)}\right) $ is dense in $%
V_{n_{y}(i)}\cap G^{0}$ and $\mathcal{O}\left( y_{i},V_{n_{y}(i)}\right) $
is dense in $V_{n_{x}(i+1)}\cap G^{0}$;

\item $d(g_{i},g_{i+1})\leq 2^{-i}$ and $d(h_{i},h_{i+1})\leq 2^{-i}$;

\item if $i>0$ and $k_{x}(i)=\max \left\{ g_{i}(\lambda ),g_{i}^{-1}(\lambda
)\mid \lambda \leq i\right\} $, then $\forall z\in s\left[ V_{n_{x}\left(
i\right) }\right] $, $\forall ^{\ast }\gamma \in V_{n_{x}(i)}z$, $f(r(\gamma
))\in N_{k_{x}(i)}f(z)$;

\item if $i\geq 0$ and $k_{y}(i)=\max \left\{ h_{i}(\lambda
),h_{i}^{-1}(\lambda )\mid \lambda \leq i\right\} $, then $\forall z\in s%
\left[ V_{n_{y}\left( i\right) }\right] $, $\forall ^{\ast }\gamma \in
V_{n_{y}(i)}z$, $f(r(\gamma ))\in N_{k_{y}(i)}f(z)$.
\end{itemize}

Granted the construction, the sequences $\left( g_{i}\right) ,\left(
h_{i}\right) $ in $S_{\infty }$ are $D$-Cauchy and hence converge to
elements $g,h\in S_{\infty }$. Furthermore $d_{Y}\left(
g_{i}f(x),h_{i}f(y)\right) \rightarrow 0$ and hence $gf(x)=hf(y)$. This
concludes the proof that $f(x)E_{S_{\infty }}^{Y}f(y)$.

We assume recursively that we have defined $%
x_{i},y_{i},g_{i},h_{i},n_{x}(i),n_{y}(i)$ and explain how to define $%
x_{i+1},g_{i+1},n_{x}(i+1)$. The definition of $y_{i+1},h_{i+1},n_{y}(i+1)$
is similar. We have that the local orbit $\mathcal{O}\left(
y_{i},V_{n_{y}(i)}\right) $ is somewhere dense. Pick a nonempty open subset $%
W$ of $V_{n_{y}(i)}$ that is contained in the closure of $\mathcal{O}\left(
y_{i},V_{n_{y}(i)}\right) $. By recursive hypothesis we have that $\mathcal{O%
}\left( x_{i},V_{n_{x}(i)}\right) $ is dense in $W$. Let $\gamma _{0},\ldots
,\gamma _{\ell -1}\in V_{n_{x}(i)}$ such that, setting $z_{j}=s\left( \gamma
_{j}\right) $ for $j<\ell $ and $z_{\ell }=r\left( \gamma _{\ell -1}\right) $%
, one has that $z_{0}=x_{i}$, $z_{\ell }\in W$, and $z_{j+1}=r\left( \gamma
_{j}\right) $ for $j<\ell $. Since by inductive assumption we have that $%
\forall z\in s\left[ V_{n_{x}(i)}\right] $, $\forall ^{\ast }\gamma \in
V_{n_{x}(i)}z$, $f(r(\gamma ))\in N_{k_{x}(i)}f(z)$, after modifying the
sequence $\left( \gamma _{0},\ldots ,\gamma _{\ell -1}\right) $ we can
assume that, for every $j<\ell $, $f\left( z_{j+1}\right) =p_{j}f\left(
z_{j}\right) $ for some $p_{j}\in N_{k_{x}\left( i\right) }$. Therefore $%
f(z_{\ell })=pf(z)$ where $p=p_{\ell -1}p_{\ell -2}\cdots p_{0}\in
N_{k_{x}(i)}$. We may then let $x_{i+1}=z_{\ell }$, $g_{i+1}=pg_{i}$, $%
k_{x}(i+1)=\max \left\{ g_{i+1}(\lambda ),g_{i+1}^{-1}(\lambda ):\lambda
\leq i+1\right\} $, and $n_{x}(i+1)\in \mathbb{N}$ such that $x_{i+1}\in
V_{n_{x}(i+1)}$ and $\forall x^{\prime }\in s\left[ V_{n_{x}\left(
i+1\right) }\right] $, $\forall ^{\ast }\gamma \in V_{n_{x}\left( i+1\right)
}x^{\prime }$, $f(r(\gamma ))\in N_{k_{x}(i+1)}f(x^{\prime })$. This
concludes the definition of $x_{i+1},g_{i+1},n_{x}(i+1)$.
\end{proof}

\section{Admissible log-convex sequences\label{Section:sequences}}

Recall from Section \ref{Section:reproducing} that a sequence $\boldsymbol{a}
$ in $(0,1]^{\mathbb{N}}$ is \emph{admissible log-convex} if $a_{0}=1$, $(%
\frac{a_{n}}{a_{n+1}})_{n}$ is nonincreasing and converges to $1$, and $%
\sum_{n}a_{n}=\infty $. The set $\mathcal{A}_{0}\subset (0,1]^{\mathbb{N}}$
of admissible log-convex sequences is Borel. We say that two admissible
log-convex sequences $\boldsymbol{a}$ and $\boldsymbol{a}^{\prime }$ are 
\emph{growth equivalent }if there exist $c,C>0$ such that $c\leq \frac{%
a_{n}^{\prime }}{a_{n}}\leq C^{\prime }$ for every $n\in \mathbb{N}$. We
denote by $E_{\mathcal{A}_{0}}$ the corresponding equivalence relation on $%
\mathcal{A}_{0}$. The main goal of this section is to prove the following
result:

\begin{proposition}
\label{Proposition:nonclassify-sequences}Admissible log-convex sequences
considered up to growth equivalence are not classifiable by countable
structures.
\end{proposition}

However, it is not difficult to verify that admissible log-convex sequences
are classifiable by the orbits of a Polish group action up
to growth equivalence.
This means that there exists a continuous Polish
group action $G\curvearrowright X$ such that $E_{\mathcal{A}_{0}}$ is Borel
reducible to the orbit equivalence relation $E_{G}^{X}$. The crucial point
is that if 
\begin{equation*}
B=\{(-\log (a_{n})):(a_{n})\in A\}\subset (0,\infty )^{\mathbb{N}},
\end{equation*}%
then 
\begin{equation*}
H=\{z\in \ell ^{\infty }:\text{ there exist }x,y\in B\text{ with }x-y=z\}
\end{equation*}%
is a subgroup of $\ell ^{\infty }$ which is separable in the $\ell ^{\infty
} $-metric, and two sequences $(a_{n}),(a_{n}^{\prime })$ in $A$ have the
same growth if and only if $(-\log (a_{n}))$ and $(-\log (a_{n}^{\prime }))$
belong to the same $H$-orbit under translation.

The rest of this section is dedicated to the proof of Proposition \ref%
{Proposition:nonclassify-sequences}. Consider the equivalence relation $F$
on $\left( 0,1\right) ^{\mathbb{N}}$ defined by%
\begin{equation*}
\boldsymbol{s}F\boldsymbol{s}^{\prime }\text{ if and only if }%
\sup_{n}\left\vert \sum_{k<n}\left( \prod_{i\leq k}s_{i}-\prod_{i\leq
k}s_{i}^{\prime }\right) \right\vert < \infty \text{.}
\end{equation*}%
Define furthermore the Borel function%
\begin{eqnarray*}
(0,1)^{\mathbb{N}} &\rightarrow &(0,1]^{\mathbb{N}} \\
\boldsymbol{s} &\mapsto &f\left( \boldsymbol{s}\right) =\exp \left(
-\sum_{k<n}\prod_{i\leq k}s_{i}\right) _{n\in \mathbb{N}}
\end{eqnarray*}%
where the empty sum is $0$. Observe that for $s\in (0,1)^{\mathbb{N}}$, we
have that $f(\boldsymbol{s})_{0}=1$, $f(\boldsymbol{s})$ is log-convex and $%
f(\boldsymbol{s})_{n}/f(\boldsymbol{s})_{n+1}\geq 1$ for all $n\in \mathbb{N}
$. Let $X\subset \left( 0,1\right) ^{\mathbb{N}}$ be the set of $\boldsymbol{%
s}\in \left( 0,1\right) ^{\mathbb{N}}$ such that $f\left( \boldsymbol{s}%
\right) \in \mathcal{A}_{0}$.\ Using the fact that $f(\boldsymbol{s})\in 
\mathcal{A}_{0}$ if and only if $f(\boldsymbol{s})$ is not summable, it is
not difficult to verify that $X$ is a dense $G_{\delta }$ subset of $\left(
0,1\right) ^{\mathbb{N}}$. The restriction $f|_{X}$ of $f$ to $X$ is a Borel
reduction from $F|_{X}$ to $E_{\mathcal{A}_{0}}$. It is thus enough to show
that $F|_{X}$ is not classifiable by countable structures.

\begin{lemma}
$F$ has meager classes
\end{lemma}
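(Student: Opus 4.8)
The plan is to show that for each equivalence class $[\boldsymbol{s}]_F$ the set of sequences $F$-equivalent to $\boldsymbol{s}$ is meager in $(0,1)^{\mathbb{N}}$, from which the lemma follows. Fix $\boldsymbol{s}\in(0,1)^{\mathbb{N}}$. The defining condition of $F$ requires that the partial sums $\sum_{k<n}\big(\prod_{i\le k}s_i-\prod_{i\le k}s'_i\big)$ stay bounded; equivalently, writing $t_k = \prod_{i\le k}s_i$ and $t'_k=\prod_{i\le k}s'_i$, the series $\sum_k (t_k - t'_k)$ has bounded partial sums. I would exploit the fact that a single coordinate $s'_m$ can be perturbed freely in $(0,1)$ while changing the tail $\prod_{i\le k}s'_i$ for all $k\ge m$ by a fixed multiplicative factor, and this drastically changes the partial sums unless compensated.

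The key step is a Baire category argument. For $N\in\mathbb{N}$ let
\begin{equation*}
C_N = \Big\{\boldsymbol{s}'\in(0,1)^{\mathbb{N}} : \sup_n\Big|\sum_{k<n}\big(t_k - t'_k\big)\Big|\le N\Big\},
\end{equation*}
so that $[\boldsymbol{s}]_F = \bigcup_N C_N$. Each $C_N$ is closed (it is an intersection over $n$ of closed conditions, each depending continuously on finitely many coordinates of $\boldsymbol{s}'$). It therefore suffices to show each $C_N$ has empty interior. Given a basic open box determined by constraints on coordinates $s'_0,\dots,s'_{m-1}$, I would pick a point $\boldsymbol{s}'$ in the box and then modify the coordinates $s'_m, s'_{m+1},\dots$ to force the partial sums to exceed $N$ in absolute value. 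Concretely, by choosing $s'_j$ close to $1$ for $m\le j\le M$ one makes $t'_k$ for $k$ in that range roughly equal to the fixed positive number $\prod_{i<m}s'_i$, so $\sum_{k=m}^{M}(t_k-t'_k)$ is approximately $\sum_{k=m}^{M} t_k - (M-m+1)\prod_{i<m}s'_i$; since $t_k\in(0,1)$ this quantity has absolute value tending to $\infty$ as $M\to\infty$ (the second term dominates), pushing the sequence out of $C_N$. This shows $C_N$ is nowhere dense, hence $[\boldsymbol{s}]_F$ is meager.

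The main obstacle I anticipate is bookkeeping in the perturbation step: one must verify that the chosen modification genuinely stays inside the prescribed open box (only coordinates $\ge m$ are touched, so this is automatic) and that the estimate on $\big|\sum_{k<n}(t_k-t'_k)\big|$ can be made to exceed any prescribed $N$ using only the freedom in coordinates $s'_m,\ldots,s'_M$ — which is where one uses that $\prod_{i<m}s'_i$ is a fixed positive constant and that $t_k$ is bounded by $1$, so the discrepancy accumulates linearly in $M$. A minor point is checking that the resulting $\boldsymbol{s}'$ still lies in $(0,1)^{\mathbb{N}}$, which holds since all perturbed values are chosen in $(0,1)$. Once $C_N$ is nowhere dense for every $N$, meagerness of each $F$-class — and hence the statement that $F$ has meager classes — follows immediately.
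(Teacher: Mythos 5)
Your overall strategy is the same as the paper's: decompose the class as $\bigcup_N C_N$ with each $C_N$ closed, and show each $C_N$ has empty interior by perturbing the tail of a point of a basic box so that the perturbed partial products $t'_k$ stay near the positive constant $P=\prod_{i<m}s'_i$. The gap is in your key estimate. Writing $t_k=\prod_{i\le k}s_i$, the sequence $(t_k)$ is decreasing in $(0,1)$, hence converges to some $c\in[0,1)$, and therefore $\sum_{k=m}^{M}t_k=(c+o(1))(M-m+1)$. Your quantity $\sum_{k=m}^{M}t_k-(M-m+1)P$ thus grows like $(c-P)(M-m+1)$, which diverges only when $c\neq P$; the bound $t_k\in(0,1)$ does \emph{not} make the second term dominate, since both terms are at most $M-m+1$ and grow linearly with comparable constants. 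Concretely, if $t_k\downarrow c>0$ with $\sum_k(t_k-c)<\infty$, and you happen to choose $\boldsymbol{s}'$ with $\prod_{i<m}s'_i=c$ and a tail tending to $1$ fast enough that $t'_k\to c$ with $\sum_k(c-t'_k)<\infty$, then $\sum_{k<n}(t_k-t'_k)$ stays bounded and the perturbed point does not leave $C_N$. So the argument as written fails for those $\boldsymbol{s}$ whose partial products do not tend to $0$.

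The paper closes exactly this hole before running the category argument: it first reduces to the case $\prod_{i\le k}s_i\to 0$, which is legitimate because the set of such $\boldsymbol{s}$ is comeager and $F$-equivalence forces the (Ces\`aro, hence actual) limits of the partial products to agree, so any class meeting the complement is contained in that meager complement. Once $c=0$, one has $\sum_{k\le M}t_k=o(M)$ and the linear term $(M-m+1)P$ genuinely dominates — this is the paper's perturbation $t_i=1-2^{-i}$ for $i>n_0$, which makes the perturbed products converge to a positive constant. You can repair your write-up either by making this same reduction, or by using the freedom in the open box to arrange $\prod_{i<m}s'_i\neq c$ (this product ranges over an interval as $\boldsymbol{s}'$ varies in the box). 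The remaining points — closedness of $C_N$, the perturbation staying inside the box, and the decomposition of the class — are correct.
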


\begin{proof}
Fix $\boldsymbol{s}\in (0,1)$. We want to show that the $F$-class of $%
\boldsymbol{s}$ is meager. We can assume without loss of generality that $%
\prod_{i\leq k}s_{i}\rightarrow 0$ for $k\rightarrow \infty $, as the set of
such $\boldsymbol{s}$ is a comeager subset of $(0,1)^{\mathbb{N}}$. Fix $%
m\in \mathbb{N}$ and let $K_{m}$ be the (closed) set of $\boldsymbol{t}\in
(0,1)^{\mathbb{N}}$ such that, for every $n\in \mathbb{N}$,%
\begin{equation*}
\left\vert \sum_{k<n}\left( \prod_{i\leq k}s_{i}-\prod_{i\leq k}t_{i}\right)
\right\vert \leq m\text{.}
\end{equation*}%
Observe that if $\boldsymbol{t}^{0}\in K$ and $n_{0}\in \mathbb{N}$ then the
element $\boldsymbol{t}$ of $(0,1]^{\mathbb{N}}$ defined by 
\begin{equation*}
t_{i}=\left\{ 
\begin{array}{ll}
t_{i}^{0} & \text{for }i\leq n_{0}\text{,} \\ 
1-2^{-i} & \text{otherwise}%
\end{array}%
\right.
\end{equation*}%
does not belong to $K_{m}$. Therefore $K_{m}$ is nowhere dense. Finally
observe that the $F$-class of $\boldsymbol{s}$ is $\bigcup_{m}K_{m}$.
\end{proof}

Let now $\Gamma $ be the subgroup of $\mathbb{R}_{+}^{\mathbb{N}}$
containing those sequences $\boldsymbol{g}$ such that%
\begin{equation*}
\sum_{n}\left\vert \prod_{k\leq n}g_{k}-1\right\vert <\infty \text{.}
\end{equation*}%
Observe that $\Gamma $ is indeed a subgroup of $\mathbb{R}_{+}^{\mathbb{N}}$%
. In fact suppose that $\boldsymbol{g},\boldsymbol{h}\in \Gamma $. Fix $%
n_{0}\in \mathbb{N}$ such that 
\begin{equation*}
\left\vert \prod_{k\leq n}g_{k}-1\right\vert \leq \frac{1}{2}
\end{equation*}%
for every $n\geq n_{0}$. Then%
\begin{equation*}
\sum_{n\in \mathbb{N}}\left\vert \prod_{k\leq n}g_{k}^{-1}-1\right\vert \leq
\sum_{n<n_{0}}\left\vert \prod_{k\leq n}g_{k}^{-1}-1\right\vert +2\sum_{n\in 
\mathbb{N}}\left\vert \prod_{k\leq n}g_{k}-1\right\vert <\infty
\end{equation*}%
and hence $\boldsymbol{g}^{-1}\in \Gamma $. Furthermore%
\begin{equation*}
\sum_{n\in \mathbb{N}}\left\vert \prod_{k\leq n}g_{k}h_{k}-1\right\vert \leq
\sum_{n<n_{0}}\left\vert \prod_{k\leq n}g_{k}h_{k}-1\right\vert +\frac{3}{2}%
\sum_{n\in \mathbb{N}}\left\vert \prod_{k\leq n}h_{k}-1\right\vert
+\sum_{n}\left\vert \prod_{k\leq n}g_{k}-1\right\vert <\infty
\end{equation*}%
and hence $\boldsymbol{gh}\in \Gamma $. Consider the coordinate-wise
multiplication action $\Gamma \curvearrowright \mathbb{R}_{+}^{\mathbb{N}}$.
Let $E$ be the restriction to $\left( 0,1\right) ^{\mathbb{N}}$ of the orbit
equivalence relation of the action $\Gamma \curvearrowright \mathbb{R}_{+}^{%
\mathbb{N}}$. Since $F$ has meager classes, $X\subset (0,1)^{\mathbb{N}}$ is
comeager, and $E\subset F$, in order to prove that $F|_{X}$ is not
classifiable by countable structures it is enough to show that $E$ is
generically $S_{\infty }$-ergodic.

Define the bi-invariant metric $d_{\Gamma }$ on $\Gamma $ by setting%
\begin{equation*}
d_{\Gamma }\left( \boldsymbol{g},\boldsymbol{h}\right) =\sum_{n\in \mathbb{N}%
}\left\vert \prod_{k\leq n}g_{k}-\prod_{k\leq n}h_{k}\right\vert \text{.}
\end{equation*}%
We claim that $d_{\Gamma }$ induces a Polish topology on $\Gamma $. To this
end, consider the injective map $\Phi :\mathbb{R}_{+}^{\mathbb{N}%
}\rightarrow \mathbb{R}^{\mathbb{N}}$ defined by%
\begin{equation*}
\boldsymbol{a}\mapsto \left( \left( \prod_{k\leq n}a_{k}\right) -1\right)
_{n}\text{.}
\end{equation*}%
Observe that the restriction of $\Phi $ to $\Gamma $ is an isometry from $%
\left( \Gamma ,d_{\Gamma }\right) $ to $\ell ^{1}$ endowed with the $\ell
^{1}$-metric. Furthermore the image of $\Gamma $ under $\Phi $ is a $%
G_{\delta }$ subset of $\ell ^{1}$, since $\boldsymbol{b}\in \Phi \left[
\Gamma \right] $ if and only if $b_{n}>-1$ for every $n\in \mathbb{N}$.
Since a $G_{\delta }$ subspace of a Polish space is Polish \cite[Theorem 3.11%
]{kechris_classical_1995}, this concludes the proof that $d_{\Gamma }$
induces a Polish topology on $\Gamma $.

If $\boldsymbol{g}\in \Gamma$ and $\boldsymbol{s}\in (0,1)^{\mathbb{N}}$,
define $\boldsymbol{gs}\in \mathbb{R}_{+}^{\mathbb{N}}$ by%
\begin{equation*}
\left( \boldsymbol{gs}\right) _{n}=g_{n}s_{n}\text{.}
\end{equation*}%
Consider now the groupoid%
\begin{equation*}
G=\left\{ \left( \boldsymbol{g},\boldsymbol{s}\right) \in \Gamma \times
(0,1)^{\mathbb{N}}:\boldsymbol{gs}\in \left( 0,1\right) ^{\mathbb{N}%
}\right\} \text{.}
\end{equation*}%
Composition and inversion of arrows in $G$ are defined by%
\begin{equation*}
\left( \boldsymbol{g},\boldsymbol{s}\right) \left( \boldsymbol{h},%
\boldsymbol{t}\right) =\left( \boldsymbol{gh},\boldsymbol{t}\right)
\end{equation*}%
whenever $\boldsymbol{ht}=\boldsymbol{s}$, and%
\begin{equation*}
\left( \boldsymbol{g},\boldsymbol{s}\right) ^{-1}=\left( \boldsymbol{g}^{-1},%
\boldsymbol{gs}\right) \text{.}
\end{equation*}%
Being a closed subset of $\Gamma \times (0,1)^{\mathbb{N}}$, $G$ is Polish
with the induced topology. Clearly composition and inversion of arrows are
continuous.\ Furthermore the map $\left( \boldsymbol{1},\boldsymbol{s}%
\right) \mapsto \boldsymbol{s}$ allows one to identify the set of objects of 
$G$ with $\left( 0,1\right) ^{\mathbb{N}}$. It remains to show that
composition of arrows is open. To this purpose it is enough to show that the
source map%
\begin{eqnarray*}
G &\rightarrow &(0,1)^{\mathbb{N}} \\
\left( \boldsymbol{g},\boldsymbol{s}\right) &\mapsto &\boldsymbol{s}
\end{eqnarray*}%
is open; see \cite[Exercise I.1.8]{resende_lectures_2006}. Suppose that $%
\left( \boldsymbol{g},\boldsymbol{s}\right) \in G$, and $U$ is an open
neighborhood of $\left( \boldsymbol{g},\boldsymbol{s}\right) $. Thus there
exist $\varepsilon >0$ and $N\in \mathbb{N}$ such that $U$ contains all the
pairs $\left( \boldsymbol{h},\boldsymbol{t}\right) \in G $ such that $%
d_{\Gamma }\left( \boldsymbol{g},\boldsymbol{h}\right) <\varepsilon $ and $%
\left\vert s_{n}-t_{n}\right\vert <\varepsilon $ for $n\leq N$. Suppose that 
$\varepsilon > \eta >0$ is such that $g_{n}\left( s_{n}+\eta \right) <1$ for
every $n\leq N$. Consider the neighborhood $W$ of $\boldsymbol{s}$
consisting of those $\boldsymbol{t}\in \left( 0,1\right) ^{\mathbb{N}}$ such
that $\left\vert s_{n}-t_{n}\right\vert <\eta $ for every $n \le N$. We
claim that $s\left[ U\right] \supset W$. In fact if $\boldsymbol{t}\in W$ we
have that for $n\leq N$,%
\begin{equation*}
g_{n}t_{n}\leq g_{n}\left( s_{n}+\eta \right) <1
\end{equation*}%
and therefore $\left( \boldsymbol{g},\boldsymbol{t}\right) \in U$.

In the following lemma we establish that $G$ is a turbulent Polish groupoid.
Together with Theorem \ref{Theorem:turbulence-groupoids}, this implies that
its associated orbit equivalence relation $E$ is generically $S_{\infty }$%
-ergodic, concluding the proof of Proposition \ref%
{Proposition:nonclassify-sequences}.

\begin{lemma}
Any element $\boldsymbol{s}$ of $\left( 0,1\right) ^{\mathbb{N}}$ is a
turbulent object with dense orbit for the Polish groupoid $G$.
\end{lemma}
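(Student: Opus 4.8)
Recall first that $\boldsymbol t\in(0,1)^{\mathbb N}$ lies in the orbit of $\boldsymbol s$ precisely when the ratio sequence $\boldsymbol\rho=(t_n/s_n)_n$ belongs to $\Gamma$, i.e.\ $\sum_n\bigl|\prod_{k\le n}\rho_k-1\bigr|<\infty$, and that $\boldsymbol t=\boldsymbol\rho\boldsymbol s$ forces $0<\rho_k<1/s_k$ for every $k$. The elementary moves in $\Gamma$ that I will rely on are the sequences $\boldsymbol h$ with $h_n=g$, $h_{n+1}=g^{-1}$ and $h_k=1$ for $k\notin\{n,n+1\}$, which lie in $\Gamma$ with $d_\Gamma(\boldsymbol h,\boldsymbol 1)=|g-1|$; more generally any finitely supported $\boldsymbol h$ with $\prod_k h_k=1$ lies in $\Gamma$, with $d_\Gamma(\boldsymbol h,\boldsymbol 1)$ controlled by the deviation of its partial products from $1$, and products of such sequences are again of this form.

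To show that the orbit of $\boldsymbol s$ is dense, given a target $\boldsymbol t^{*}\in(0,1)^{\mathbb N}$, an integer $N$ and $\varepsilon>0$ I would put $t_n=t^{*}_n$ for $n\le N$, set $c=\prod_{k\le N}t^{*}_k\big/\prod_{k\le N}s_k$, pick $t_{N+1},\dots,t_{N+M}\in(0,1)$ with $\prod_{k=N+1}^{N+M}t_k=c^{-1}\prod_{k=N+1}^{N+M}s_k$ (for instance all equal to the appropriate root), and set $t_k=s_k$ for $k>N+M$. Such a choice of $t_{N+1},\dots,t_{N+M}$ is possible as soon as the right-hand side lies in $(0,1)$, i.e.\ as soon as $\prod_{k=N+1}^{N+M}s_k<c$, which I would verify holds once $M$ is large enough. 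For this $\boldsymbol t$ the ratio sequence $\boldsymbol t/\boldsymbol s$ is finitely supported with product $1$, hence lies in $\Gamma$, so $\boldsymbol t$ is in the orbit of $\boldsymbol s$ and agrees with $\boldsymbol t^{*}$ on the first $N$ coordinates.

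For turbulence, fix $\boldsymbol s$ and a basic neighbourhood $V$ of the object $\boldsymbol s$, determined by a radius $\delta>0$ in $d_\Gamma$ and a tolerance $\delta$ on the coordinates $0,\dots,N$. I would prove that $\mathcal O(\boldsymbol s,V)$ is dense in an open box $B=\{\boldsymbol t:|t_n-s_n|<\eta,\ n\le N\}$ for a suitably small $\eta=\eta(\delta,N,\boldsymbol s)$. Given $\boldsymbol t^{*}\in B$, an index $M\ge N$ and $\varepsilon>0$, I would construct a path from $\boldsymbol s$ built from the elementary moves above, each of $d_\Gamma$-length $<\delta$ (splitting larger adjustments into many such sub-steps, which is harmless): first nudge the coordinates $0,\dots,N$ from $s_n$ to $t^{*}_n$, all intermediate values staying within $\eta<\delta$ of $s_n$ and hence inside $V$, compensating each nudge by reciprocal factors spread over far-out coordinates; then, since coordinates of index $>N$ are unconstrained by $V$, move the coordinates $N+1,\dots,M$ to $t^{*}_{N+1},\dots,t^{*}_M$; finally correct the running ratio product back to $1$ beyond some index exactly as in the density argument, so that the endpoint of the path lies in the orbit of $\boldsymbol s$. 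This endpoint is then a point of $\mathcal O(\boldsymbol s,V)$ agreeing with $\boldsymbol t^{*}$ up to $\varepsilon$ on $0,\dots,M$, which gives the desired density.

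The step I expect to be the main obstacle is the bookkeeping shared by both parts, namely keeping every intermediate configuration inside $(0,1)^{\mathbb N}$: multiplying coordinate $k$ by a factor $\ge 1/s_k$ is forbidden, so the reciprocal factors produced when nudging the first $N$ coordinates, and when moving the coordinates in $(N,M]$, must be distributed over enough distant coordinates that no coordinate is ever pushed up to $1$; this is exactly the same mechanism that makes the inequality $\prod_{k=N+1}^{N+M}s_k<c$ available in the density argument, and both come down to controlling the rate at which the partial products $\prod_{k\le n}s_k$ decrease. Once the lemma is established every object of $G$ is turbulent with dense orbit, so $G$ is generically preturbulent, and Theorem~\ref{Theorem:turbulence-groupoids} gives that the orbit equivalence relation $E$ is generically $S_\infty$-ergodic, completing the proof of Proposition~\ref{Proposition:nonclassify-sequences}.
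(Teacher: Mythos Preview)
Your density argument has a real gap at the step ``$\prod_{k=N+1}^{N+M}s_k<c$ \dots\ holds once $M$ is large enough'': this requires $\prod_{k>N}s_k=0$, which fails for example when $s_k=1-2^{-(k+2)}$. The obstruction is not mere bookkeeping but is intrinsic to such $\boldsymbol s$, and the lemma as stated is in fact false for them. Indeed, if $\boldsymbol t=\boldsymbol g\boldsymbol s$ with $\boldsymbol g\in\Gamma$, then $\prod_{k\le n}t_k=\bigl(\prod_{k\le n}g_k\bigr)\bigl(\prod_{k\le n}s_k\bigr)\to\prod_ks_k$, and since $\prod_{k\le n}t_k$ is nonincreasing this forces $t_0\ge\prod_ks_k$; hence the orbit misses the nonempty open set $\{t_0<\prod_ks_k\}$. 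The same issue afflicts your turbulence argument and, for that matter, the paper's: the paper places the entire compensating factor at the single coordinate $n_1+1$, and there is no reason why $\bigl(\prod_{j\le n_1}s_j/t_j\bigr)s_{n_1+1}$ should lie in $(0,1)$, while its ``easy to see'' for density is simply wrong for these $\boldsymbol s$.

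What rescues the application is that only \emph{generic} preturbulence is needed. The set $\{\boldsymbol s\in(0,1)^{\mathbb N}:\prod_ks_k=0\}$ is a dense $G_\delta$, and on it your argument (and the paper's) goes through verbatim: once the tail products of $\boldsymbol s$ tend to $0$, the compensating factors can always be absorbed into sufficiently many far-out coordinates while staying in $(0,1)$. So the correct statement is that every $\boldsymbol s$ with $\prod_ks_k=0$ is turbulent with dense orbit, which already yields generic preturbulence and hence Proposition~\ref{Proposition:nonclassify-sequences}. As for method, the paper's turbulence step is more economical than your chain of two-coordinate moves: it uses a single $\boldsymbol g$ with $g_k=(t_k/s_k)^{1/N}$ for $k\le n_1$, one compensating entry at $k=n_1{+}1$, and $g_k=1$ otherwise, applied $N$ times; but this is a cosmetic difference, not a conceptual one.
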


\begin{proof}
It is easy to see that the orbit of $\boldsymbol{s}$ is dense. It remains to
show that for any neighborhood $V$ of $\left( \boldsymbol{1},\boldsymbol{s}%
\right) $ in $G$ the local orbit $\mathcal{O}\left( \boldsymbol{s},V\right) $
is somewhere dense. Without loss of generality we can assume that there
exist $\varepsilon >0$ and $n_{0}\in \mathbb{N}$ such that, if 
\begin{equation*}
U=\left\{ \boldsymbol{t}\in \left( 0,1\right) ^{\mathbb{N}}:\forall n\leq
n_{0},{}\left\vert \frac{t_{n}}{s_{n}}-1\right\vert <\varepsilon \right\}
\end{equation*}%
and%
\begin{equation*}
W=\left\{ \boldsymbol{g}\in \Gamma :d_{\Gamma }\left( \boldsymbol{g},%
\boldsymbol{1}\right) <\varepsilon \right\} \text{,}
\end{equation*}%
then $V=\left( W\times U\right) \cap G$. We claim that the local orbit $%
\mathcal{O}\left( \boldsymbol{s},V\right) $ dense in $U$. Fix $\boldsymbol{t}%
\in U$ and $n_{1}\geq n_{0}$. Let $N\in \mathbb{N}$, to be determined later.
Set 
\begin{equation*}
g_{k}=\left\{ 
\begin{array}{ll}
\sqrt[N]{t_{k}/s_{k}} & \text{for }k\leq n_{1}\text{,} \\ 
\prod_{j\leq n_{1}}\sqrt[N]{s_{j}/t_{j}} & \text{for }k=n_{1}+1\text{,} \\ 
1 & \text{otherwise.}%
\end{array}%
\right.
\end{equation*}%
Observe that, for $N$ large enough, we have that $\boldsymbol{g}\in \Gamma $%
, $d\left( \boldsymbol{g},\boldsymbol{1}\right) <\varepsilon $, and $%
\boldsymbol{g}^{i}\boldsymbol{s}\in U$ for every $i\leq N$. Finally observe
that $g_{k}^{N}s_{k}=t_{k}$ for $k\leq n_{1}$. This concludes the proof that
the local orbit $\mathcal{O}\left( \boldsymbol{s},V\right) $ is dense in $U$%
. Since this is true for every neighborhood $V$ of $\boldsymbol{s}$ in $G$, $%
\boldsymbol{s}$ is a turbulent point for $G$.
\end{proof}

\section{Conformal equivalence of operator algebraic varieties\label%
{Section:conformal}}

Fix $d\in \mathbb{N}$ and let $\mathcal{V}_{d}$ be the space of operator
algebraic varieties in $\mathbb{B}_{d}$. Denote by $\Aut(\mathbb{B}_{d})$ be
the group of conformal automorphisms of $\mathbb{B}_{d}$. The \emph{%
pseudo-hyperbolic distance} $\rho $ on $\mathbb{B}_{d}$ is defined by 
\begin{equation*}
\rho \left( a,b\right) =\left\Vert \varphi _{a}\left( b\right) \right\Vert ,
\end{equation*}%
where $\left\Vert \cdot \right\Vert $ is the usual Euclidean norm and $%
\varphi _{a}$ is the conformal automorphism of $\mathbb{B}_{d}$ which
interchanges $0$ and $a$ defined in \cite[Subsection 2.2.1]%
{rudin_function_2008}. Recall that $\rho $ is a proper metric (since its
closed balls coincide with Euclidean closed balls) that induces the usual
topology in $\mathbb{B}_{d}$. Furthermore, $\Aut(\mathbb{B}_{d})$ is a
closed subgroup of the group of isometries of $\left( \mathbb{B}_{d},\rho
\right) $, and hence a locally compact Polish group when endowed with the
compact-open topology. More information about conformal automorphisms of $%
\mathbb{B}_{d}$ can be found in \cite[Chapter 2]{rudin_function_2008}.
Consider the Borel action of $\Aut(\mathbb{B}_{d})$ on $\mathcal{V}_{d}$
defined by $\left( \alpha ,V\right) \mapsto \alpha \left[ V\right] $.
Observe that the relation $E_{\Aut(\mathbb{B}_{d})}^{\mathcal{V}_{d}}$ of $%
\Aut(\mathbb{B}_{d})$-conformal equivalence of operator algebraic varieties
in $\mathbb{B}_{d}$ is the orbit equivalence relation associated with this
action. Therefore, it follows from \cite[Theorem 1.1]{kechris_countable_1992}
that $E_{\Aut(\mathbb{B}_{d})}^{\mathcal{V}_{d}}$ is essentially countable.

The rest of this section is devoted to prove Theorem \ref{Theorem:countable}%
, asserting that $E_{\Aut(\mathbb{B}_{d})}^{\mathcal{V}_{d}}$ has in fact
maximum complexity among essentially countable equivalence relations. As
explained in the introduction, the same conclusion will then apply to the
relation of (completely) isometric isomorphisms of multiplier algebras $%
\mathrm{Mult}(H_{\infty }^{2}|_{V})$ for $V\in \mathcal{V}_{d}$.

Observe that the canonical inclusion of $\mathbb{B}_{d}$ into $\mathbb{B}%
_{d+1}$ induces an inclusion of $\mathcal{V}_{d}$ into $\mathcal{V}_{d+1}$.
According to the following proposition, this inclusion is a Borel reduction
from the relation of $\Aut(\mathbb{B}_d)$-conformal equivalence on $\mathcal{%
V}_{d}$ to the relation of $\Aut(\mathbb{B}_{d+1})$-conformal equivalence on 
$\mathcal{V}_{d+1}$. We mention that this result also follows from \cite[%
Theorem 4.4]{davidson_operator_2015}.

\begin{proposition}
Let $X,Y \subset \mathbb{B}_d$ be subsets. Then $X$ and $Y$ are conformally
equivalent via an element of $\Aut(\mathbb{B}_d)$ if and only if they are
conformally equivalent via an element of $\Aut(\mathbb{B}_{d+1})$.
\end{proposition}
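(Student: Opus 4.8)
The plan is to reduce both implications to the linear case by means of Cartan's uniqueness theorem, after first recording an elementary extension fact. Throughout I identify $\mathbb{B}_d$ with the slice $\mathbb{B}_d\times\{0\}\subset\mathbb{B}_{d+1}$, and for $a$ in $\mathbb{B}_d$ (resp.\ in $\mathbb{B}_{d+1}$) I write $\varphi_a^{(d)}$ (resp.\ $\varphi_a^{(d+1)}$) for the involutive automorphism interchanging $0$ and $a$. The first step is to observe that every $F\in\Aut(\mathbb{B}_d)$ extends to an automorphism $\widetilde F$ of $\mathbb{B}_{d+1}$ that maps the slice onto itself and restricts to $F$ there. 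Indeed, by the structure of $\Aut(\mathbb{B}_d)$ \cite[Chapter 2]{rudin_function_2008} one has $F=U\circ\varphi_a^{(d)}$ for a unitary $U$ of $\mathbb{C}^d$ and some $a\in\mathbb{B}_d$; I would take $\widetilde F=(U\oplus 1)\circ\varphi_a^{(d+1)}$, the only thing to check being that, when $a$ is regarded as an element of the slice, the formula for $\varphi_a$ in \cite[Subsection 2.2.1]{rudin_function_2008} shows that $\varphi_a^{(d+1)}$ preserves $\mathbb{C}^d\times\{0\}$ and agrees there with $\varphi_a^{(d)}$. This is immediate, since both the orthogonal projection onto $\mathbb{C}a$ and its complementary projection map $\mathbb{C}^d\times\{0\}$ into itself. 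The forward implication of the proposition then follows at once: if $F(X)=Y$, then $\widetilde F(X\times\{0\})=Y\times\{0\}$.

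For the converse, assume $G\in\Aut(\mathbb{B}_{d+1})$ with $G(X\times\{0\})=Y\times\{0\}$; the case $X=\emptyset$ is trivial, so fix $p\in X$ and let $q\in Y$ be determined by $G(p,0)=(q,0)$. I would pass to $G_0:=\varphi_q^{(d+1)}\circ G\circ\varphi_p^{(d+1)}\in\Aut(\mathbb{B}_{d+1})$. Since $\varphi_p^{(d+1)}$ and $\varphi_q^{(d+1)}$ are involutions sending $0$ to $(p,0)$ and $(q,0)$ respectively, $G_0$ fixes the origin, hence is the restriction to $\mathbb{B}_{d+1}$ of a unitary $W$ of $\mathbb{C}^{d+1}$ by Cartan's theorem \cite[Chapter 2]{rudin_function_2008}. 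Using the slice-preservation established in the first step, a direct computation gives $W(X'\times\{0\})=Y'\times\{0\}$, where $X':=\varphi_p^{(d)}(X)$ and $Y':=\varphi_q^{(d)}(Y)$ are subsets of $\mathbb{B}_d$.

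It remains to descend $W$ to a unitary of $\mathbb{C}^d$. Let $Z$ and $Z'$ be the linear spans in $\mathbb{C}^d$ of $X'$ and $Y'$ respectively. Then $W$ maps $Z\times\{0\}$, the span of $X'\times\{0\}$, onto $Z'\times\{0\}$, the span of $Y'\times\{0\}$, and $\dim Z'=\dim Z$ because $W$ is injective. Composing $W|_{Z\times\{0\}}$ with the coordinate projection $\mathbb{C}^{d+1}\to\mathbb{C}^d$ loses no norm, since $W(Z\times\{0\})\subset\mathbb{C}^d\times\{0\}$; thus it is a linear isometry $Z\to Z'$, which I would extend to a unitary $V$ of $\mathbb{C}^d$. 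By construction $W(x',0)=(Vx',0)$ for every $x'\in X'$, so $V(X')=Y'$, and therefore $F:=\varphi_q^{(d)}\circ V\circ\varphi_p^{(d)}\in\Aut(\mathbb{B}_d)$ satisfies $F(X)=\varphi_q^{(d)}(V(X'))=\varphi_q^{(d)}(Y')=Y$.

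The step requiring the most care is the last one: one cannot simply assert that $W$ preserves $\mathbb{C}^d\times\{0\}$, because $X'$ (hence $X$) may span a proper subspace of $\mathbb{C}^d$, and a priori $W$ could move the orthogonal complement of $Z\times\{0\}$ inside $\mathbb{C}^{d+1}$ in an uncontrolled way. Routing the argument through the spans $Z$ and $Z'$, and only then extending the resulting partial isometry to a full unitary of $\mathbb{C}^d$, is exactly what circumvents this. Everything else reduces to the cited properties of $\Aut(\mathbb{B}_d)$ and to elementary linear algebra.
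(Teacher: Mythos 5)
Your proof is correct. The overall strategy coincides with the paper's --- both arguments reduce to the case where distinguished points of $X$ and $Y$ sit at the origin, exploit linearity on the span of $X$, and then extend a partially defined linear map to a full automorphism --- but the key technical input differs. The paper normalizes only the varieties (arranging $0\in X$ and $0\in Y$ via the transitive action of the slice-preserving subgroup) and then invokes Rudin's result \cite[Proposition 2.4.2]{rudin_function_2008} that automorphisms carry affine slices of the ball to affine slices, which immediately yields that $F$ restricts to an isomorphism $\operatorname{span}(X)\cap\mathbb{B}_d\to\operatorname{span}(Y)\cap\mathbb{B}_d$ that can be re-extended. You instead normalize the automorphism itself, so that Cartan's uniqueness theorem turns it into a unitary $W$ of $\mathbb{C}^{d+1}$, and then carry out the linear algebra by hand: restrict $W$ to $\operatorname{span}(X')\times\{0\}$, observe the image lies in $\mathbb{C}^d\times\{0\}$, and extend the resulting partial isometry to a unitary of $\mathbb{C}^d$. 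Your route is more self-contained (it needs only Cartan's theorem and the explicit formula for $\varphi_a$, rather than the affine-set machinery), at the cost of being longer; and you correctly identify and handle the one genuine subtlety, namely that $W$ need not preserve $\mathbb{C}^d\times\{0\}$ globally when $X$ spans a proper subspace --- the same issue the paper's affine-span argument is designed to circumvent.
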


\begin{proof}
By \cite[Section 2.2.8]{rudin_function_2008}, every conformal automorphism
of $\mathbb{B}_d$ extends to a conformal automorphism of $\mathbb{B}_{d+1}$.
This establishes one direction.

Conversely, suppose that $F \in \Aut(\mathbb{B}_{d+1})$ maps $X$ onto $Y$,
and let $G \subset \Aut(\mathbb{B}_{d+1})$ denote the subgroup of all
automorphisms which fix $\mathbb{B}_d$. We wish to show that $X$ and $Y$ are 
$G$-equivalent. Since $\Aut(\mathbb{B}_d)$ acts transitively on $\mathbb{B}%
_d $ \cite[Theorem 2.2.3]{rudin_function_2008}, and since every element of $%
\Aut(\mathbb{B}_d)$ extends to an element of $G$, the subgroup $G$ acts
transitively on $\mathbb{B}_d$. We may therefore assume that $0 \in X$ and $%
0 \in Y$. By Proposition 2.4.2 in \cite{rudin_function_2008} and the
discussion preceding it, $F$ maps the affine span of $X$ onto the affine
span of $Y$. Hence, $F$ maps $\mathrm{span}(X) \cap \mathbb{B}_d$ onto $%
\mathrm{span}(Y) \cap \mathbb{B}_d$, where $\mathrm{span}$ denotes the
linear span. Since $\mathrm{span}(X) \cap \mathbb{B}_d$ and $\mathrm{span}%
(Y) \cap \mathbb{B}_d$ are themselves unitarily equivalent to complex balls
of dimension $e \le d$, and since automorphisms of $\mathbb{B}_e$ extend to
automorphisms of higher-dimensional balls, we conclude that there exists a
map $\widetilde F \in G$ such that $F |_{\mathrm{span}(X) \cap \mathbb{B}_d}
= \widetilde F |_{\mathrm{span}(X) \cap \mathbb{B}_d}$. This completes the
proof.
\end{proof}

Therefore to establish the desired lower bound on the complexity of $E_{\Aut(%
\mathbb{B}_{d})}^{\mathcal{V}_{d}}$ it suffices to consider the case $d=1$.
The $1$-dimensional complex Euclidean ball is usually called the \emph{disc }%
and denoted by $\mathbb{D}$. The elements of $\mathcal{V}_{1}\setminus \{%
\mathbb{D}\}$ are called \emph{Blaschke sequences}. For background material
on Blaschke sequences, the reader is referred to \cite[Chapter II, Section 2]%
{garnett_bounded_2007}. Recall from Section \ref{Section:complexity} that
the orbit equivalence relation $E(F_{2},2)$ associated with the left
translation action of the free group $F_{2}$ on its subsets has maximum
complexity among essentially countable equivalence relation. We will now
show that $E(F_{2},2)$ is Borel reducible to the relation $E_{\Aut(\mathbb{B}%
_{d})}^{\mathcal{V}_{d}}$ of $\Aut(\mathbb{D})$-conformal equivalence of
Blaschke sequences. To this end, we will adapt the proof of \cite[Theorem 4.1%
]{hjorth_complexity_2000}.

The lower bound in \cite[Theorem 4.1]{hjorth_complexity_2000} is achieved by
encoding the action of $F_{2}$ on $\left\{ 0,1\right\} ^{F_{2}}$ by
translation. The crucial point in this proof is that $\mathrm{Aut}(\mathbb{D}%
)$ contains a copy of $F_{2}$ such that the orbit of every point in $\mathbb{%
D}$ is discrete. We require something stronger, namely that the orbit of
every point is a Blaschke sequence.

\begin{proposition}
\label{prop:schottky} There exists a discrete group $\Gamma \subset \mathrm{%
Aut}(\mathbb{D})$ which is isomorphic to $F_{2}$ such that 
\begin{equation*}
\sum_{g\in \Gamma }\left( 1-|g(z)|\right) <\infty
\end{equation*}%
for every $z\in \mathbb{D}$.
\end{proposition}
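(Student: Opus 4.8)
The plan is to realize $F_2$ as a Schottky group acting on $\mathbb{D}$ and to estimate the sum $\sum_{g\in\Gamma}(1-|g(z)|)$ geometrically. Recall that for a discrete subgroup $\Gamma\subset\Aut(\mathbb{D})$ and a point $z\in\mathbb{D}$, the quantity $1-|g(z)|$ is comparable, up to a bounded factor depending only on $z$, to $e^{-\rho_{\mathbb{D}}(0,g(z))}$, where $\rho_{\mathbb{D}}$ is the hyperbolic distance on the disc; more precisely $1-|w|^2 \asymp e^{-d_{\mathrm{hyp}}(0,w)}$. So it suffices to exhibit $\Gamma\cong F_2$ for which $\sum_{g\in\Gamma} e^{-d_{\mathrm{hyp}}(0,g(0))}<\infty$, i.e. whose \emph{Poincar\'e series} at exponent $1$ converges; then a short argument (using the triangle inequality to move the base point from $0$ to an arbitrary $z$) handles all $z\in\mathbb{D}$ at once.

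To build such a $\Gamma$, I would take a classical ping-pong/Schottky configuration. Fix two hyperbolic elements $\alpha,\beta\in\Aut(\mathbb{D})$ with pairwise disjoint closed "isometric" disc-pairs: four closed Jordan subdiscs (or horoball-like regions) $D_\alpha^\pm, D_\beta^\pm$ of $\overline{\mathbb{D}}$ whose closures are pairwise disjoint, with $\alpha(\overline{\mathbb{D}}\setminus D_\alpha^-)\subset D_\alpha^+$ and symmetrically for $\alpha^{-1},\beta,\beta^{-1}$. By the ping-pong lemma the group $\Gamma=\langle\alpha,\beta\rangle$ is free of rank $2$ and discrete. The key point is that if these four discs are chosen to have small Euclidean diameter and, crucially, to be \emph{uniformly} contracted — each generator maps the complement of one disc strictly inside another with a contraction factor $\lambda<1$ in, say, a fixed conformal metric on a neighborhood of the union of the four closures — then a reduced word $w=x_1\cdots x_n$ of length $n$ in the generators satisfies $\operatorname{diam}(w(z))$-type estimates forcing $1-|w(z)|\le C\lambda^{n}$ for all $z$ in a fixed compact set, uniformly. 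Since the number of reduced words of length $n$ in $F_2$ is $4\cdot 3^{n-1}$, choosing the discs small enough that $3\lambda<1$ gives $\sum_{g\in\Gamma}(1-|g(z)|)\le C'\sum_n 3^n\lambda^n<\infty$.

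Concretely, I would carry this out in the following order. First, record the metric fact $1-|w|\le 2e^{-d_{\mathrm{hyp}}(0,w)}$ and the triangle inequality $d_{\mathrm{hyp}}(0,g(z))\ge d_{\mathrm{hyp}}(0,g(0))-d_{\mathrm{hyp}}(0,z)$, reducing the problem to the Poincar\'e series at $z=0$. Second, specify two explicit generators — for instance conjugates of $z\mapsto\frac{z+t}{1+tz}$ by rotations — with a real parameter $t$ close enough to $1$ that the four Schottky discs are pairwise disjoint and have diameters small enough to make the contraction constant $\lambda$ satisfy $3\lambda<1$; verify the ping-pong hypotheses so $\Gamma\cong F_2$ and $\Gamma$ is discrete. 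Third, prove the uniform decay estimate $1-|g(0)|\le C\lambda^{|g|}$ by induction on word length $|g|$, using that each generator maps the union of the "far" discs into one small disc with controlled shrinking. Fourth, sum the geometric series over $F_2$ using $|\{g: |g|=n\}|=4\cdot3^{n-1}$. The main obstacle is the third step: getting a genuinely \emph{uniform} (over the compact exhaustion, and over all words) exponential contraction rather than just discreteness of orbits. This is where care with the choice of the Schottky discs and with a conformal metric on a neighborhood of their union is essential; once that uniform $\lambda$ with $3\lambda<1$ is in hand, the rest is a routine geometric-series estimate.
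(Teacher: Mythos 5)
Your proposal is correct in outline, but it takes a genuinely different route from the paper. The paper also starts from a Schottky group $\Gamma\cong F_2$, but then finishes by citing classical Fuchsian-group theory: the closure of the Dirichlet domain of a Schottky group meets $\partial\mathbb{D}$ in a set of positive Lebesgue measure, and Tsuji's Theorem XI.4 then gives $\sum_{g\in\Gamma}(1-|g(0)|)<\infty$ directly (with the passage from $0$ to general $z$ also quoted from Tsuji). That argument applies to \emph{every} Schottky group with no quantitative hypothesis. Your argument instead is self-contained and elementary: it trades the appeal to Tsuji for the crude count $|\{g:|g|=n\}|=4\cdot3^{n-1}$ together with a uniform one-step contraction factor $\lambda$ satisfying $3\lambda<1$, which forces you to choose the generators "thin" enough (e.g.\ $t$ close to $1$ in your explicit family, so the isometric discs are small and the derivative of each generator on the three non-excluded discs is tiny). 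This only establishes the estimate for suitably chosen Schottky groups rather than all of them, but existence is all the proposition asserts, so that is not a defect. Two details to nail down in your step three: the uniform bound $|x'|\le\lambda$ should be claimed only on the union of the three closed discs other than $D_{x^{-1}}$ (on the full complement of $D_{x^{-1}}$ the derivative equals $1$ on the isometric circle, so no uniform $\lambda<1$ exists there), and the conversion from "small diameter of the nested set $x_1\cdots x_{n-1}(D_{x_n})$" to "$1-|g(0)|$ small" should use that this nested set meets $\partial\mathbb{D}$, because M\"obius transformations of $\mathbb{D}$ preserve the unit circle and the Schottky discs touch it. With those points made explicit, your induction and the geometric series close the argument.
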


\begin{proof}
Let $g_{1}$ and $g_{2}$ be two conformal automorphisms of $\mathbb{D}$ which
generate a Schottky group (see Chapter II, Section 1 in \cite%
{dalbo_geodesic_2011}), and let $\Gamma $ be the group generated by $g_{1}$
and $g_{2}$. Then $\Gamma $ is isomorphic to $F_{2}$ by \cite[Chapter II,
Proposition 1.6]{dalbo_geodesic_2011}. By the same proposition, the closure
of the Dirichlet domain $\mathcal{D}_{0}(\Gamma )$ of $\Gamma $ contains
nontrivial arcs in $\partial \mathbb{D}$ (see \cite[Chapter I, Section 2.3]%
{dalbo_geodesic_2011} for the definition of the Dirichlet domain). In
particular, the Lebesgue measure of $\overline{\mathcal{D}_{0}(\Gamma )}\cap
\partial \mathbb{D}$ is strictly positive. In this situation, \cite[Theorem
XI.4]{tsuji_potential_1975} applies to show that 
\begin{equation*}
\sum_{g\in \Gamma }\left( 1-|g(0)|\right) <\infty .
\end{equation*}%
Finally, the argument preceding Theorem XI.3 in \cite{tsuji_potential_1975}
shows that this sum is finite if $0$ is replaced with an arbitrary point $%
z\in \mathbb{D}$.
\end{proof}

It seems worthwhile to give a concrete example of two conformal
automorphisms of $\mathbb{D}$ which generate a group $\Gamma $ as in the
statement of the proposition. Let $\mathbb{H}$ denote the upper half-plane
in $\mathbb{C}$. Recall that $\mathbb{D}$ and $\mathbb{H}$ are conformally
equivalent via the Cayley map 
\begin{eqnarray*}
\mathbb{H} &\rightarrow &\mathbb{D} \\
z &\mapsto &\frac{z-i}{z+i}\text{.}
\end{eqnarray*}%
This map induces an isomorphism of topological groups between $\mathrm{Aut}(%
\mathbb{D})$ and $\mathrm{Aut}(\mathbb{H})$. Moreover, $\mathrm{Aut}(\mathbb{%
H})$ is isomorphic to $\mathrm{PSL}_{2}(\mathbb{R})$ via the map that
assigns to the matrix $%
\begin{pmatrix}
a & b \\ 
c & d%
\end{pmatrix}%
\in \mathrm{PSL}_{2}(\mathbb{R})$ the corresponding \emph{M\"{o}bius
transformation}%
\begin{equation*}
z\mapsto \frac{az+b}{cz+d}{}\text{.}
\end{equation*}%
Let $\Phi :\mathrm{PSL}_{2}(\mathbb{R})\rightarrow \mathrm{Aut}(\mathbb{D})$
denote the isomorphism obtained by composing the two isomorphisms above. The
group $\Lambda $ considered in the proof of \cite[Theorem 4.1]%
{hjorth_complexity_2000} is generated by the images of 
\begin{equation*}
\begin{pmatrix}
1 & 2 \\ 
0 & 1%
\end{pmatrix}%
\quad \text{and}\quad 
\begin{pmatrix}
1 & 0 \\ 
2 & 1%
\end{pmatrix}%
\end{equation*}%
under $\Phi $. The group $\Lambda $ is isomorphic to $F_{2}$, but the orbit
of $0$ under $\Lambda $ is not a Blaschke sequence. This follows from the
following facts:

\begin{itemize}
\item $\Lambda $ has finite index in $\mathrm{PSL}(2,\mathbb{Z})$, and

\item the orbit of $0$ under $\Phi \left[ \mathrm{PSL}_{2}(\mathbb{Z})\right]
$ is not a Blaschke sequence, as its conical limit set on $\partial \mathbb{D%
}$ has positive Lebesgue measure; see \cite[Chapter II, Section 3.1]%
{dalbo_geodesic_2011}.
\end{itemize}

Moreover, $\Lambda $ is not a Schottky group, but just a generalized
Schottky group in the sense of \cite[Chapter II, Section 1.1]%
{dalbo_geodesic_2011}. However, if we let $\Gamma \subset \mathrm{Aut}(%
\mathbb{D})$ denote the group generated by the images of 
\begin{equation*}
\begin{pmatrix}
1 & 3 \\ 
0 & 1%
\end{pmatrix}%
\quad \text{and}\quad 
\begin{pmatrix}
1 & 0 \\ 
3 & 1%
\end{pmatrix}%
\text{,}
\end{equation*}%
then it is not hard to see that $\Gamma $ is indeed a Schottky group, and
thus satisfies the conclusion of the proposition.

In the proof of the next theorem, we require the following elementary
observation.

\begin{lemma}
\label{lem:triangle} Let $(X,d)$ be a metric space and let $%
x^{(0)},x^{(1)},x^{(2)}$ and $y^{(0)},y^{(1)},y^{(2)},y^{(3)}$ by points in $%
X$ such that 
\begin{equation*}
d(x^{(i)},x^{(j)})=d(y^{(i)},y^{(j)})
\end{equation*}%
for $0\leq i,j\leq 2$ and such that the distances $d(y^{(i)},y^{(j)})$ are
all distinct for $0\leq i<j\leq 3$. If $\theta :X\rightarrow X$ is an
isometry such that 
\begin{equation*}
\theta (\{x^{(0)},x^{(1)},x^{(2)}\})\subset
\{y^{(0)},y^{(1)},y^{(2)},y^{(3)}\},
\end{equation*}%
then $\theta (x^{(i)})=y^{(i)}$ for $0\leq i\leq 2$.
\end{lemma}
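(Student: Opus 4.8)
The plan is to argue purely combinatorially, leveraging the hypothesis that the six pairwise distances among $y^{(0)},y^{(1)},y^{(2)},y^{(3)}$ are all distinct. First I would observe that since $\theta$ is an isometry, it preserves distances, so the three distances realized among $\theta(x^{(0)}),\theta(x^{(1)}),\theta(x^{(2)})$ are exactly $d(x^{(i)},x^{(j)})=d(y^{(i)},y^{(j)})$ for $0\le i<j\le 2$. In particular $\theta$ is injective on $\{x^{(0)},x^{(1)},x^{(2)}\}$ (the three points are distinct because their pairwise distances, being distinct, cannot all be zero — in fact at most one could be zero, and a short case check rules that out since two coinciding $x$'s would force two coinciding $y$'s and hence a repeated distance $0$). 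So $\theta$ maps $\{x^{(0)},x^{(1)},x^{(2)}\}$ bijectively onto a $3$-element subset $S\subset\{y^{(0)},y^{(1)},y^{(2)},y^{(3)}\}$.

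Next I would identify which $3$-element subset $S$ must be. The multiset of the three pairwise distances within $S$ equals $\{d(y^{(0)},y^{(1)}),\,d(y^{(0)},y^{(2)}),\,d(y^{(1)},y^{(2)})\}$, by the previous paragraph. Among the four $3$-element subsets of $\{y^{(0)},y^{(1)},y^{(2)},y^{(3)}\}$, each determines a distinct triple of the six pairwise distances; since all six distances are distinct, these four triples are pairwise distinct as sets. Hence the only subset whose distance-triple matches is $S=\{y^{(0)},y^{(1)},y^{(2)}\}$, so $\theta(x^{(j)})\in\{y^{(0)},y^{(1)},y^{(2)}\}$ for $j=0,1,2$.

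Finally I would pin down the bijection $\{x^{(0)},x^{(1)},x^{(2)}\}\to\{y^{(0)},y^{(1)},y^{(2)}\}$. Consider the two edges incident to $x^{(0)}$: they have lengths $d(x^{(0)},x^{(1)})=d(y^{(0)},y^{(1)})$ and $d(x^{(0)},x^{(2)})=d(y^{(0)},y^{(2)})$. Since all pairwise distances among the $y^{(i)}$ are distinct, within the triangle on $\{y^{(0)},y^{(1)},y^{(2)}\}$ the vertex $y^{(0)}$ is the \emph{unique} vertex whose two incident edges have exactly the lengths $d(y^{(0)},y^{(1)})$ and $d(y^{(0)},y^{(2)})$: indeed $y^{(1)}$ has incident lengths $\{d(y^{(0)},y^{(1)}),d(y^{(1)},y^{(2)})\}$ and $y^{(2)}$ has $\{d(y^{(0)},y^{(2)}),d(y^{(1)},y^{(2)})\}$, and both of these pairs differ from $\{d(y^{(0)},y^{(1)}),d(y^{(0)},y^{(2)})\}$ because $d(y^{(1)},y^{(2)})\notin\{d(y^{(0)},y^{(1)}),d(y^{(0)},y^{(2)})\}$. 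Therefore $\theta(x^{(0)})=y^{(0)}$; the same argument applied at $x^{(1)}$ and $x^{(2)}$ gives $\theta(x^{(1)})=y^{(1)}$ and $\theta(x^{(2)})=y^{(2)}$, completing the proof. There is no serious obstacle here — the only point requiring a little care is the degenerate possibility that some $d(x^{(i)},x^{(j)})=0$, but as noted this is incompatible with the $y^{(i)}$ having distinct (hence at most one zero) pairwise distances, so it can be dispatched in a sentence.
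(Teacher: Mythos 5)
Your proof is correct and rests on essentially the same idea as the paper's: because all six pairwise distances among the $y^{(i)}$ are distinct, the isometry $\theta$ is forced to match each edge of the triangle on $x^{(0)},x^{(1)},x^{(2)}$ with the corresponding edge among the $y^{(i)}$, which pins down the vertices. The paper phrases this directly as $\theta(\{x^{(i)},x^{(j)}\})=\{y^{(i)},y^{(j)}\}$ for each pair and then intersects, whereas you first identify the image $3$-subset and then each vertex via its incident edge lengths; this, together with your explicit handling of the degenerate case of coinciding points, is a harmless elaboration of the same argument.
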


\begin{proof}
The assumptions on the distances and the fact that $\theta$ is an isometry
imply that 
\begin{align*}
\theta( \{ x^{(0)}, x^{(1)} \}) &= \{y^{(0)}, y^{(1)} \}, \\
\theta( \{ x^{(0)}, x^{(2)} \}) &= \{y^{(0)}, y^{(2)} \}, \text{ and } \\
\theta( \{ x^{(1)}, x^{(2)} \}) &= \{y^{(1)}, y^{(2)} \}.
\end{align*}
This is only possible if $\theta(x^{(i)}) = y^{(i)}$ for $0 \le i \le 2$.
\end{proof}

We are now ready to prove the main result of this section.

\begin{theorem}
The relation $E(F_{2},2)$ is Borel reducible to the relation of $\Aut(%
\mathbb{D})$-conformal equivalence of Blaschke sequences.
\end{theorem}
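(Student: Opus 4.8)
The plan is to construct, for each subset $S \subseteq F_2$, a Blaschke sequence $V_S \subset \mathbb{D}$ in a Borel way, such that $V_S$ and $V_T$ are $\Aut(\mathbb{D})$-conformally equivalent precisely when $S$ and $T$ lie in the same $F_2$-orbit. I would take the Schottky group $\Gamma \cong F_2$ from Proposition~\ref{prop:schottky}, so that the $\Gamma$-orbit of any point of $\mathbb{D}$ is automatically a Blaschke sequence. Fix a base point, and identify $\Gamma$ with $F_2$ via the generators. The idea, following \cite[Theorem 4.1]{hjorth_complexity_2000}, is to attach to each group element $g \in F_2$ a small ``gadget'' — a finite configuration of points clustered near the point $g(z_0)$ — whose internal geometry encodes whether $g \in S$, and which is rigid enough that any conformal automorphism carrying $V_S$ onto $V_T$ must carry gadgets to gadgets respecting the group structure, hence must be realized by an element of $\Gamma$ acting by left translation.

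Concretely, I would first choose, using the discreteness of the $\Gamma$-orbits and the fact that $\Aut(\mathbb{D})$ acts by isometries of the pseudo-hyperbolic (equivalently hyperbolic) metric $\rho$, a base point $z_0$ and a radius $\delta>0$ so small that the $\rho$-balls $B(g(z_0),\delta)$ for $g \in \Gamma$ are pairwise disjoint; the Blaschke condition is preserved since we only perturb each orbit point within a bounded neighborhood, and a bounded perturbation of a Blaschke sequence is again Blaschke. Inside each such ball I would place a fixed finite ``marker'' configuration of three points $x^{(0)},x^{(1)},x^{(2)}$ (to pin down the location and an orientation/labelling of the three generator-directions $g\mapsto gg_1, gg\mapsto g g_1^{-1}$, etc., via prescribed mutual distances), together with one extra ``flag'' point whose distance to the markers is one of two distinguished values according to whether $g \in S$ or $g \notin S$. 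The auxiliary Lemma~\ref{lem:triangle} is exactly the tool that forces an isometry taking one gadget into another to respect the labelling of the three marker points; I would arrange the finitely many prescribed distances across the whole construction to be pairwise distinct (and distinct from all the pseudo-hyperbolic distances occurring among distinct gadgets), so that Lemma~\ref{lem:triangle} applies and no ``accidental'' matchings between points of different gadgets can occur.

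Having built $V_S$, the forward implication is easy: if $T = h \cdot S$ for $h \in F_2$, then the corresponding element of $\Gamma$ is an automorphism of $\mathbb{D}$ carrying $V_S$ onto $V_T$, because left translation by $h$ permutes the gadget locations $g(z_0) \mapsto (hg)(z_0)$ and preserves each gadget's internal metric data (isometries preserve all the prescribed distances), while sending the $S$-flag of $g$ to the $T$-flag of $hg$ since $g \in S \iff hg \in T$. For the reverse implication I would argue: suppose $F \in \Aut(\mathbb{D})$ maps $V_S$ onto $V_T$. Since $F$ is a $\rho$-isometry and the gadgets are uniformly separated clusters, $F$ must map each gadget (a cluster of diameter $<\delta$) onto another gadget; using Lemma~\ref{lem:triangle} and the distinctness of distances, $F$ sends the marker points of the gadget at $g(z_0)$ to the marker points of the gadget at some $\pi(g)(z_0)$, respecting the generator-labels, which forces $\pi: F_2 \to F_2$ to be left translation by some fixed $h \in F_2$ (the labelling of edges out of each vertex corresponds to the Cayley-graph structure, and a label-preserving graph isomorphism of a Cayley graph is a left translation). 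Matching the flag points then gives $g \in S \iff \pi(g) = hg \in T$, i.e.\ $T = h \cdot S$. The Borel measurability of $S \mapsto V_S$ (as a map into $\mathcal{V}_1$ with the Effros Borel structure) is routine: the gadget locations and points depend continuously on nothing and the flag choice is a Borel function of the bit $S(g)$, so one gets a Borel map into the space of closed subsets, and $V_S \ne \mathbb{D}$ since it is discrete.

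The main obstacle I expect is not any single step but the bookkeeping needed to make the rigidity argument airtight: one must choose the finitely many marker/flag distances, the radius $\delta$, and the base point $z_0$ so that (i) all gadgets stay pairwise $\rho$-disjoint, (ii) the Blaschke sum $\sum_{v \in V_S}(1-|v|)$ stays finite uniformly over $S$ (which follows from Proposition~\ref{prop:schottky} since only finitely many points are added per orbit point, within a bounded region), and above all (iii) the set of all pseudo-hyperbolic distances realized between pairs of points of $V_S$ has the property that the ``short'' distances (within a gadget) are all mutually distinct and strictly smaller than every ``long'' distance (between distinct gadgets), and that the distinctness needed for Lemma~\ref{lem:triangle} holds simultaneously for every pair of gadgets. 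This is a finite set of strict inequalities plus a ``generic distinctness'' condition, so it can be met, but writing it out carefully — and verifying that a label-preserving automorphism of the Cayley graph of $F_2$ with respect to $\{g_1^{\pm 1}, g_2^{\pm 1}\}$ is exactly a left translation — is where the real work lies. I would handle the latter by noting that such an automorphism is determined by the image of the identity vertex and the induced permutation of the four edge-labels, and that requiring it to send $g_i$-edges to $g_i$-edges (not merely edges to edges) removes the label-permutation freedom, leaving only left translations.
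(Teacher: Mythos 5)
Your construction is essentially the paper's: a Schottky copy of $F_2$ from Proposition \ref{prop:schottky}, a rigid marker triple clustered near each orbit point, a fourth ``flag'' point encoding membership, separation conditions so that an isometry must map clusters to clusters, and Lemma \ref{lem:triangle} to force the markers to match up label-by-label. The forward implication, the Borelness, and the Blaschke/separation bookkeeping are all fine. But there is a genuine gap in the reverse implication, precisely at the step where you pass from ``$F$ matches markers to markers within each pair of gadgets'' to ``$\pi$ is a left translation.'' You justify this by saying $F$ respects ``the generator-labels'' and then quoting that a label-preserving automorphism of the Cayley graph of $F_2$ is a left translation. The second half is true and easy, but the first half is not established by your construction: Lemma \ref{lem:triangle} only controls the \emph{internal} labels $0,1,2$ of the three markers inside a single gadget; nothing in the configuration encodes which neighbouring gadget is the $g_1$-neighbour as opposed to, say, the $g_1^{-1}$-neighbour (these two gadgets sit at the \emph{same} pseudo-hyperbolic distance $\rho(z_0,g_1(z_0))$ from $g(z_0)$, so inter-gadget distances cannot distinguish them). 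As written, $\pi$ could a priori be any permutation of $F_2$ realized by some isometry permuting the clusters, and the Cayley-graph lemma does not apply.

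The missing ingredient --- which is exactly what the paper uses to close the argument --- is that an automorphism of $\mathbb{D}$ is a M\"obius transformation and is therefore determined by its values at three points. Once Lemma \ref{lem:triangle} gives $F(x_e^{(i)})=x_{h}^{(i)}=h(x_e^{(i)})$ for $i=0,1,2$ and some single $h=\pi(e)\in\Gamma$, you conclude $F=h$ outright; then $F$ maps the gadget at $g$ to the gadget at $hg$ for every $g$, the flags match, and $T=hS$ follows. This makes the entire Cayley-graph detour unnecessary (and, conveniently, it is the only way I see to make your ``label-respecting'' claim true, since it shows $F$ is globally a group element and hence does preserve all the edge labels after the fact). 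So the fix is short, but as it stands your proof replaces the one genuinely rigid step with an unjustified combinatorial claim. The remaining difference from the paper --- you put the flag at one of two positions, the paper includes or omits a fourth point --- is immaterial.
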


\begin{proof}
The proof is an adaptation of the proof of the lower bound in \cite[Theorem
4.1]{hjorth_complexity_2000}. The details are as follows.

Let $\Gamma$ be a group as in Proposition \ref{prop:schottky}. We will
identify $F_2$ with $\Gamma$. Moreover, let $\rho$ be the pseudohyperbolic
metric on $\mathbb{D}$, and for $z \in \mathbb{D}$ and $\varepsilon > 0$,
let 
\begin{equation*}
D_\varepsilon(z) = \{ y \in \mathbb{D}: \rho(y,z) < \varepsilon\}.
\end{equation*}
We will explicitly construct four Blaschke sequences 
\begin{equation*}
B_i = \{ x_g^{(i)} : g \in F_2 \}
\end{equation*}
for $0 \le i \le 3$ and find $\varepsilon > 0$ with the following properties:

\begin{enumerate}
\item $gx_{h}^{(i)}=x_{gh}^{(i)}$ for $g,h\in F_{2}$ and $0\leq i\leq 3$;

\item $x_{g}^{(i)}\in D_{\varepsilon /5}(x_{g}^{(0)})$ for $g\in F_{2}$ and $%
0\leq i\leq 3$;

\item $D_{\varepsilon /2}(x_{g}^{(0)})\cap (B_{0}\cup B_{1}\cup B_{2}\cup
B_{3})=\{x_{g}^{(i)}:0\leq i\leq 3\}$;

\item The distances $\rho (x_{g}^{(i)},x_{g}^{(j)})$ do not depend on $g\in
F_{2}$ and are all distinct and positive for $0\leq i<j\leq 3$.
\end{enumerate}

The construction proceeds as follows. Let $x_{1}^{(0)}\in \mathbb{D}$ be
arbitrary and set $x_{g}^{(0)}=g(x_{1}^{(0)})$ for $g\in F_{2}$. Let $%
B_{0}=\{x_{g}^{(0)}:g\in F_{2}\}$. Then $B_{0}$ is a Blaschke sequence. In
particular, there exists $\varepsilon >0$ such that 
\begin{equation*}
D_{\varepsilon }(x_{1}^{(0)})\cap B_{0}=\{x_{1}^{(0)}\}.
\end{equation*}%
Choose distinct points $x_{1}^{(i)}\in D_{\varepsilon
/5}(x_{1}^{(0)})\setminus \{x_{1}^{(0)}\}$ for $i\in \{1,2,3\}$ such that
the pseudohyperbolic distances $\rho (x_{1}^{(i)},x_{1}^{(j)})$ for $i<j$
are all different from each other, and define $x_{g}^{(i)}=g(x_{1}^{(i)})$
for $i\in \{1,2,3\}$ and $g\in F_{2}$. Moreover, set $B_{i}=\{x_{g}^{(i)}:g%
\in F_{2}\}$. Using the fact that every $g\in F_{2}$ is an isometry with
respect to $\rho $, properties (1)--(4) are now easy to verify.

Given $A\subset F_{2}$, let 
\begin{equation*}
V_{A}=B_{0}\cup B_{1}\cup B_{2}\cup \{x_{g}^{(3)}:g\in A\}.
\end{equation*}%
We will show that $A=gB$ for some $g\in F_{2}$ if and only if $V_{A}$ and $%
V_{B}$ are $\Aut(\mathbb{D})$-conformally equivalent. Clearly, if $g\in
F_{2} $ such that $gA=B$, then $g[V_{A}]=V_{B}$, hence $V_{A}$ and $V_{B}$
are $\Aut(\mathbb{D})$-conformally equivalent. Conversely, assume that there
exists $\theta \in \mathrm{Aut}(\mathbb{D})$ with $\theta \lbrack
V_{A}]=V_{B}$. We will show that there exists $g\in F_{2}$ such that $\theta
=g$. Since $x_{1}^{(0)}\in V_{A}$, there exists $g\in F_{2}$ and $i\in
\{0,1,2,3\}$ such that $\theta (x_{1}^{(0)})=x_{g}^{(i)}$. Observe that for $%
k\in \{1,2\}$, we have 
\begin{equation*}
\rho (\theta (x_{1}^{(k)}),x_{g}^{(i)})=\rho (\theta (x_{1}^{(k)}),\theta
(x_{1}^{(0)}))=\rho (x_{1}^{(k)},x_{1}^{(0)})<\varepsilon /5
\end{equation*}%
by Condition (2). By the same condition, $\rho
(x_{g}^{(i)},x_{g}^{(0)})<\varepsilon /5$, hence 
\begin{equation*}
\theta (x_{1}^{(k)})\in D_{\varepsilon /2}(x_{g}^{(0)}).
\end{equation*}%
Therefore, Condition (3) implies that 
\begin{equation*}
\theta (\{x_{1}^{(0)},x_{1}^{(1)},x_{1}^{(2)}\})\subset \{x_{g}^{(i)}:0\leq
i\leq 3\}.
\end{equation*}%
In light of Condition (4), an application of Lemma \ref{lem:triangle} shows
that $\theta (x_{1}^{(i)})=x_{g}^{(i)}$ for $0\leq i\leq 2$. This means that 
$\theta $ and $g$ are two M\"{o}bius transformations which agree on three
points. Consequently, $\theta =g$; see for example \cite[Theorem 10.10]%
{priestley_introduction_2003}. We finish the proof by showing that $gA=B$.
Note that if $h\in A$, then $x_{h}^{(3)}\in V_{A}$. Therefore, $%
x_{gh}^{(3)}=g(x_{h}^{(3)})=\theta (x_{h}^{(3)})\in V_{B}$, so $gh\in B$.
This shows that $gA\subset B$. Similarly, $g^{-1}B\subset A$, so $gA=B$, as
desired.
\end{proof}

\section*{Appendix}

Recall that if $X$ is a Polish space, then the space $F(X)$ of nonempty
closed subsets of $X$ is a standard Borel space when endowed with the Effros
Borel structure \cite[Section 12.C]{kechris_classical_1995}. This is the
Borel structure generated by the sets%
\begin{equation*}
\left\{ K\in F(X):K\cap U\neq \varnothing \right\}
\end{equation*}%
where $U$ ranges over the open subsets of $X$. The
Kuratowski--Ryll-Nardzewski theorem asserts that there exists a sequence $%
\left( \sigma _{n}\right) $ of Borel maps from $F(X)$ to $X$ such that $%
\left( \sigma _{n}(A)\right) $ enumerates a dense subset of $A$ for every
nonempty closed subset $A$ of $X$ \cite[Theorem 12.13]%
{kechris_classical_1995}.

Fix $d\in \mathbb{N}\cup \left\{ \infty \right\} $, and let $\mathcal{V}%
_{d}\subset F(\mathbb{B}_{d})$ be the set of operator algebraic varieties in 
$\mathbb{B}_{d}$. For $d\leq d^{\prime }$ the canonical inclusion $\mathbb{B}%
_{d}\subset \mathbb{B}_{d^{\prime }}$ induces a Borel injection from $%
\mathcal{V}_{d}$ into $\mathcal{V}_{d^{\prime }}$.

\begin{proposition*}
The set $\mathcal{V}_{d}$ of operator algebraic varieties in $\mathbb{B}_{d}$
is a Borel subset of $F(\mathbb{B}_{d})$.
\end{proposition*}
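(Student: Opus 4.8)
The plan is to characterize membership in $\mathcal{V}_d$ via a countable family of Borel conditions on the dense sequence $(\sigma_n(V))$ provided by the Kuratowski--Ryll-Nardzewski theorem, and then exploit the fact that an operator algebraic variety is automatically a relatively closed subset cut out by multipliers. Recall that $V \subset \mathbb{B}_d$ is an operator algebraic variety precisely when $V = Z(S) := \{z \in \mathbb{B}_d : f(z) = 0 \text{ for all } f \in S\}$ for some subset $S \subset H^2_d$; equivalently, since the zero set of $S$ equals the zero set of the closed subspace it generates, and since every multiplier is in $H^2_d$ (as $1 \in H^2_d$), one checks that the operator algebraic varieties are exactly the sets of the form $\{z : \varphi(z) = 0 \text{ for all } \varphi \in \mathcal{I}\}$ for $\mathcal{I}$ a subset of $\Mult(H^2_d)$. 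The key structural input I would isolate first is a ``self-improvement'' property: a closed set $V \in F(\mathbb{B}_d)$ is an operator algebraic variety if and only if it equals the zero set of the ideal $I_V = \{f \in H^2_d : f|_V = 0\}$ of all functions in $H^2_d$ vanishing on it, i.e.\ $V = Z(I_V)$. One inclusion is trivial; the nontrivial inclusion $Z(I_V) \subset V$ holds exactly because $V$ is an operator algebraic variety, so $V = Z(S)$ for some $S$, whence $S \subset I_V$ and $Z(I_V) \subset Z(S) = V$.

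Given that reformulation, the strategy is: $V \in \mathcal{V}_d$ if and only if for every point $w \in \mathbb{B}_d \setminus V$ there exists $f \in H^2_d$ with $f|_V = 0$ and $f(w) \neq 0$. To make this Borel, I would first fix a countable norm-dense subset $(e_k)$ of $H^2_d$ (e.g.\ polynomials with coefficients in $\mathbb{Q} + i\mathbb{Q}$), and a countable dense set $(q_m)$ in $\mathbb{B}_d$. Using the reproducing kernel, point evaluation and the conditions ``$f|_V = 0$'' become Borel in the pair $(f, V)$: the relation $\sigma_n(V) \in \mathbb{B}_d$ together with $f(\sigma_n(V)) = 0$ for all $n$ captures $f|_V = 0$ for $f$ continuous on $\mathbb{B}_d$ (and every $f \in H^2_d$ is analytic, hence continuous, on $\mathbb{B}_d$). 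Then I would express $\mathcal{V}_d$ as
\begin{equation*}
\mathcal{V}_d = \bigcap_{m} \Big( \{ V : q_m \in \overline{V}\text{'s approximation}\} \cup \{ V : \exists k,\ (e_k)|_V \approx 0 \text{ and } e_k(q_m) \neq 0 \} \Big),
\end{equation*}
with the approximations handled via the Effros-Borel selectors; the set of $V$ containing a given point (in the closure sense) is Borel because $\{V : V \cap U \neq \varnothing\}$ generates the Effros structure, and ``$\exists k$'' is a countable union, hence Borel. The main technical point to verify carefully is that restricting quantification to the countable dense families $(e_k)$ and $(q_m)$ loses nothing: for the functions, I would use that if some $f \in H^2_d$ separates $w$ from $V$, then a sufficiently good polynomial approximant $e_k$ (in $H^2_d$-norm, hence uniformly on compact subsets of $\mathbb{B}_d$ via the kernel bound) still vanishes approximately on $V$ near $w$ and is nonzero at $w$—but here one must be cautious, since approximate vanishing is not exact vanishing.

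The hard part will therefore be the passage from ``approximately vanishing polynomials'' back to an honest operator algebraic variety, since the naive countable approximation only yields that $V$ is contained in the zero set of the $H^2_d$-closure of a countable set of functions, which is what we want, but one must also rule out that the zero set is strictly larger. I would resolve this by working on the other side: instead of approximating the separating function, directly use the closed subspace $I_V = \{f \in H^2_d : f|_V = 0\}$, which is automatically a closed subspace of $H^2_d$, and observe that $I_V$ is determined in a Borel way from $V$ (its unit ball is the $H^2_d$-closure of $\{e_k : e_k(\sigma_n(V)) = 0 \ \forall n,\ \|e_k\| \le 1\}$, and closed subspaces of a separable Hilbert space form a standard Borel space on which the zero-set map is Borel). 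Then $V \in \mathcal{V}_d$ iff $V = Z(I_V)$, and both $V \mapsto I_V$ and $I_V \mapsto Z(I_V) \in F(\mathbb{B}_d)$ are Borel, so $\mathcal{V}_d$ is the Borel set $\{V : V = Z(I_V)\}$ (equality in the standard Borel space $F(\mathbb{B}_d)$ being Borel). I expect the bulk of the write-up to consist in verifying that $V \mapsto I_V$ is Borel—this uses the Kuratowski--Ryll-Nardzewski selectors to turn ``$f$ vanishes on $V$'' into a countable conjunction of Borel conditions—and that the zero-set map on closed subspaces of $H^2_d$ is Borel, which follows from expressing $Z(\mathcal{I}) \cap U \neq \varnothing$ in terms of a dense sequence in $\mathcal{I}$ and the density of rational points.
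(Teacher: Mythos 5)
Your starting point --- $V$ is an operator algebraic variety if and only if $V=Z(I_V)$, where $I_V=\{f\in H_d^2: f|_V=0\}$ --- is exactly the characterization the paper uses. But the way you propose to turn it into a Borel condition has two genuine gaps. First, your description of $I_V$ is false: the unit ball of $I_V$ is \emph{not} the closure of $\{e_k: e_k(\sigma_n(V))=0\ \forall n,\ \|e_k\|\le 1\}$, because for a generic closed set $V$ no rational-coefficient polynomial vanishes \emph{exactly} on $V$, so this set is typically $\{0\}$ while $I_V$ is large. (The map $V\mapsto I_V$ can in fact be shown to be Borel, e.g.\ via $I_V=\bigl(\overline{\operatorname{span}}\{K_w: w\in V\}\bigr)^{\perp}$ and Borelness of the orthocomplement map, but not by the argument you give.) Second, and more seriously, you assert without proof that the zero-set map $\mathcal{I}\mapsto Z(\mathcal{I})\in F(\mathbb{B}_d)$ is Borel. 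The condition $Z(\mathcal{I})\cap U\neq\varnothing$ is an existential quantification over points of $U$ of the closed condition ``$g_n(z)=0$ for all $n$''; density of rational points in $U$ does not help, since $Z(\mathcal{I})$ need not come close to any prescribed countable set. A priori this only gives an analytic set. For $d<\infty$ one can repair this using local compactness of $\mathbb{B}_d$ (projections of Borel sets with compact sections are Borel), but the proposition is needed for $d=\infty$, where $\mathbb{B}_\infty$ is not $\sigma$-compact in norm and the argument breaks down unless one brings in weak compactness of bounded sets and weak continuity of the $g_n$ --- none of which you address.

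The paper sidesteps all of this by proving the two one-sided estimates separately and invoking Souslin's theorem: $\mathcal{V}_d$ is analytic as the image of $F(H_d^2)$ under $S\mapsto V_S$, and it is coanalytic because membership is equivalent to ``for every $x\in\mathbb{B}_d$, either $x\in V$ or there is a uniform witness to separation,'' where the outer uncountable quantifier is allowed to be a genuine $\Pi^1_1$ quantifier and only the inner existential is Borelized. The Borelization of the inner condition is precisely where the issue you flagged (``approximate vanishing is not exact vanishing'') is resolved: one quantifies over a countable dense subset $\mathcal{D}$ of the \emph{unit ball} of $H_d^2$ and asks for functions $f\in\mathcal{D}$ with $|f(x)|\ge\varepsilon$ and $|f(\sigma_i(V))|\le 2^{-n}$ for $i\le n$; weak compactness of the unit ball then produces a weak accumulation point that vanishes exactly on $V$ and is nonzero at $x$. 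Your proposal never supplies a substitute for this compactness argument, so as written it does not close.
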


\begin{proof}
Let $H_{d}^{2}$ be the Drury-Arveson space. Observe that $\mathcal{V}_{d}$
is the image of $F(H_{d}^{2})$ under the Borel map that assigns to a closed
subset $S$ of $H_{d}^{2}$ the operator algebraic variety $V_{S}$ of common
zeros of elements of $S$. Therefore $\mathcal{V}_{d}$ is analytic. By \cite[%
Theorem 14.7]{kechris_classical_1995} it remains to show that $\mathcal{V}%
_{d}$ is coanalytic. Fix a dense subset $\mathcal{D}$ in the unit ball of $%
H_{d}^{2}$, and a sequence $\left( \sigma _{n}\right) $ of Borel functions
from $F(\mathbb{B}_{d})$ to $\mathbb{B}_{d}$ such that $\left( \sigma
_{n}(S)\right) $ enumerates a dense subset of $S$ for every nonempty closed
subset $S$ of $\mathbb{B}_{d}$. We claim that a closed subset $V$ of $%
\mathbb{B}_{d}$ is an operator algebraic variety if and only if for every $%
x\in \mathbb{B}_{d}$ either $x\in V$ or there exists a strictly positive
rational number $\varepsilon $ such that for every $n\in \mathbb{N}$ there
exists $f\in \mathcal{D}$ such that $\left\vert f(x)\right\vert \geq
\varepsilon $ and $\left\vert f\left( \sigma _{i}(V)\right) \right\vert \leq
2^{-n}$ for every $i\leq n$.

One implication is obvious. For the other implication it is enough to
observe that if $x\in \mathbb{B}_{d}$ and $\left( f_{n}\right) $ is a
sequence in the unit ball of $H_{d}^{2}$, then $\left( f_{n}\right) $ has an
accumulation point $f$ in the unit ball of $H_{d}^{2}$ with respect to the
weak topology. If furthermore $\left\vert f_{n}(x)\right\vert \geq
\varepsilon $ and $\left\vert f\left( \sigma _{i}(V)\right) \right\vert \leq
2^{-n}$ for every $i\leq n$ then $f(x)\neq 0$ and $f$ vanishes on $V$. This
shows that $V$ is the set of common zeroes of all the elements of $H_{d}^{2}$
that vanish on $V$, and hence it is an operator algebraic variety.
\end{proof}

\bibliographystyle{amsplain}
\bibliography{NP-bib}

\end{document}